\newcommand{\onep}{{1'}}
\newtheorem{thm}{Theorem}
\newtheorem{lemma}[thm]{Lemma}
\theoremstyle{definition}
\newtheorem{ex}[thm]{Example}
\newcommand{\sections}{\renewcommand{\thethm}{\thesection.\arabic{thm}}
           \setcounter{thm}{0}}
\newcommand{\nosubsections}{\renewcommand{\thethm}{\thesection.\arabic{thm}}
           \setcounter{thm}{0}}
\newcommand{\co}{\colon}
\newcommand{\bD}{\mathbf{D}}
\newcommand{\bbR}{\mathbb{R}}
\newcommand{\bbZ}{\mathbb{Z}}
\newcommand{\za}{\alpha}
\newcommand{\zb}{\beta}
\newcommand{\zd}{\delta}
\newcommand{\zf}{\phi}
\newcommand{\zg}{\gamma}
\newcommand{\zj}{\psi}
\newcommand{\zl}{\lambda}
\newcommand{\zp}{\pi}
\newcommand{\zw}{\omega}
\newcommand{\zF}{\Phi}
\newcommand{\zG}{\Gamma}
\newcommand{\zJ}{\Psi}
\newcommand{\zL}{\Lambda}
\newcommand{\zt}{\tau}
\begin{document}
\title{Presentations of NET maps}

\begin{author}{William Floyd}
\address{Department of Mathematics\\ Virginia Tech\\
Blacksburg, VA 24061\\ USA}
\email{floyd@math.vt.edu}
\urladdr{http://www.math.vt.edu/people/floyd}
\end{author}

\begin{author}{Walter Parry}
\email{walter.parry@emich.edu}
\end{author}

\begin{author}{Kevin M. Pilgrim}
\address{Department of Mathematics, Indiana University, Bloomington, 
IN 47405, USA}
\email{pilgrim@indiana.edu}
\end{author}

\date{\today}

\begin{abstract} A branched covering $f: S^2 \to S^2$ is a {\em nearly
Euclidean Thurston} (NET) map if each critical point is simple and its
postcritical set has exactly four points. We show that up to
equivalence, each NET map admits a normal form in terms of simple
affine data. This data can then be used as input for algorithms
developed for the computation of fundamental invariants, now
systematically tabulated in a large census.
\end{abstract}

\subjclass[2010]{Primary: 36F10; Secondary: 57M12}

\keywords{Thurston map, branched covering}

\maketitle

\tableofcontents

\sections

\section{Introduction }\label{sec:intro}\nosubsections

This paper is part of our program to thoroughly investigate nearly
Euclidean Thurston (NET) maps, introduced in \cite{cfpp}.  A Thurston
map $f: (S^2, P_f) \to (S^2, P_f)$ with postcritical set $P_f$ is
NET if each critical point is simple and $\#P_f=4$.  The set of NET
maps contains the exceptional set of {\em Euclidean NET maps} as a
proper subset.  Euclidean NET maps are the usual exceptional set of
maps whose canonical minimal associated orbifolds in the sense of
\cite{DH} are Euclidean and have signature $(2,2,2,2)$. In contrast,
typical NET maps have hyperbolic orbifolds.

This paper addresses the problem of succinctly describing a NET map in
a computationally effective manner.  Such a description is achieved by
what we call a NET map presentation diagram, a graphical
representation of what we call a NET map presentation.  In Section
\ref{sec:diagram2map}, we show that each of these simple diagrams
$\mathbf{D}$ drawn on standard graph paper, like that shown in Figure
\ref{fig:rabbitpren}, determines a NET map.
\begin{figure}
\centerline{\includegraphics{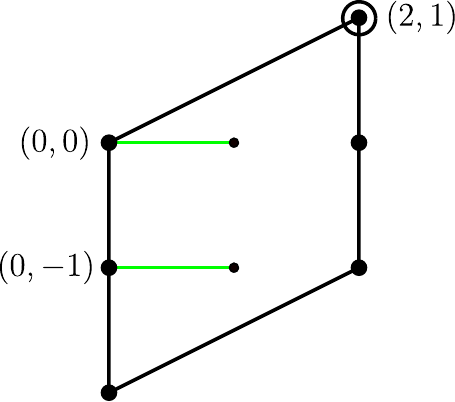}}
\caption{A NET map presentation diagram for Douady's rabbit quadratic 
polynomial}
\label{fig:rabbitpren}
  \end{figure}
Our first main result shows that every NET map arises in this way:

\begin{thm}\label{thm:pren}  Every NET map is Thurston
equivalent to one given by a NET map presentation diagram.
\end{thm}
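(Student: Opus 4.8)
The plan is to exploit the fact that, although a typical NET map has hyperbolic canonical orbifold, it always carries an auxiliary \emph{Euclidean} orbifold structure of signature $(2,2,2,2)$ coming from the four postcritical points, and that with respect to this structure $f$ is an orbifold covering. This lets me replace $f$, up to Thurston equivalence, by an affine model whose data can be transcribed onto graph paper.

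First I would install the standard $(2,2,2,2)$ orbifold structure $O_2$ on the range $(S^2,P_f)$, identifying it with the pillowcase $\mathbb{R}^2/\Gamma$, where $\Gamma = \mathbb{Z}^2 \rtimes \langle -I\rangle$ and the four corners are the images of $\tfrac12\mathbb{Z}^2$, coinciding with $P_f$. Pulling this structure back by $f$ produces an orbifold $O_1$ on the domain: since every critical point is simple and every critical value lies in $P_f$, the cone points of $O_1$ are exactly the non-critical preimages of $P_f$, each of order $2$, while the $2d-2$ critical points (Riemann--Hurwitz) become smooth. A count shows there are exactly four such cone points, so $\chi^{\mathrm{orb}}(O_1)=0$ and $O_1$ is again a $(2,2,2,2)$ pillowcase; moreover $f\colon O_1\to O_2$ is now an honest degree-$d$ orbifold covering. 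The dynamical subtlety --- that $f$ is \emph{not} an orbifold self-covering, so this model governs only a single application of $f$ and not its iterates --- is exactly why the canonical orbifold can be hyperbolic, but it causes no trouble here.

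Next I would lift the covering to the torus double covers. Each $(2,2,2,2)$ orbifold $O_i$ is doubly covered by a flat torus $T_i$ via the hyperelliptic involution, and because $f_*$ is injective and carries the (infinite-order) translations of $\Gamma$ to translations, the translation lattice $\mathbb{Z}^2\subset\Gamma$ is preserved; hence $f$ lifts to a covering $\hat f\colon T_1\to T_2$ of tori. Any covering of tori is homotopic to an affine map $\hat f(x)=Ax+b$ with $A\in M_2(\mathbb{Z})$ and $|\det A|=d$, and equivariance with the involutions forces $b\in\tfrac12\mathbb{Z}^2$, so that $\hat f$ descends to a Euclidean model $g$ of the pillowcase correspondence. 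Choosing the straightening homotopy equivariantly, so that it descends and fixes the marked points, exhibits $f$ as isotopic rel $P_f$ to $g$; thus $f$ is Thurston equivalent to the affine model $g$.

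Finally, the affine data $(A,b)$ together with the combinatorial \emph{marking} --- the identification of the domain sphere with the range sphere and the position of the cone set of $O_1$ relative to the standard corners --- is precisely what a NET map presentation records, and it can be drawn on graph paper (the integer grid serving as the universal cover of the range, the half-integer points as the corners) by plotting the image under $\hat f$ of a fundamental domain for the domain pillowcase, exactly as in Figure \ref{fig:rabbitpren}. By the construction of Section \ref{sec:diagram2map}, this diagram $\mathbf{D}$ determines a NET map Thurston equivalent to $g$, hence to $f$; varying the choices of lattice bases and fundamental domain (an action of a subgroup of $GL_2(\mathbb{Z})\times GL_2(\mathbb{Z})$) accounts for the different diagrams representing $f$ and can be pinned down to a normal form. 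I expect the main obstacle to be the second step: arranging the straightening homotopy from $\hat f$ to $Ax+b$ to be simultaneously involution-equivariant (so it descends to the pillowcase) and relative to the marked set (so it yields a genuine Thurston equivalence rather than a mere homotopy of branched covers), and tracking the half-integer translation and marking data carefully enough that the transcribed diagram reproduces $f$ itself and not merely its covering type.
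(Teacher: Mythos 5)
Your first steps reproduce the paper's set-up: pulling back the $(2,2,2,2)$ structure, counting that exactly four preimages of $P_f$ are non-critical (call this set $P_1$), and straightening the induced torus covering to an affine map. But the pivotal claim --- that the straightening ``exhibits $f$ as isotopic rel $P_f$ to $g$'' --- is false for every non-Euclidean NET map, and you have flagged the reason yourself without drawing the conclusion. The cone set of the pulled-back orbifold $O_1$ is $P_1$, not $P_f$; the straightening homotopy can be made rel $P_1$ (this is Lemma~\ref{lemma:induceup}), but the points of $P_f$ in the domain are ordinary points of $O_1$ and get moved. If $f$ really were isotopic rel $P_f$ to an affine model, $f$ would by definition be a Euclidean NET map, contradicting the fact that typical NET maps have hyperbolic canonical orbifold. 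What the straightening actually gives is the factorization $f = h\circ g$ of Section~\ref{sec:prelim}, where $g$ is Euclidean with postcritical set $P_1$ and $h$ is a push-point homeomorphism carrying $P_1$ to $P_f$ along four disjoint arcs. All of the non-Euclidean complexity of $f$ lives in the isotopy classes rel $P_1\cup P_f$ of those arcs, and they can be arbitrarily wild.

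Recording ``the position of the cone set of $O_1$ relative to the standard corners,'' as you propose, therefore does not determine $f$: one must record the arcs themselves, and a presentation diagram requires them to lift to straight line segments in the fundamental domain $F_1$. Producing such a normal form is the actual content of Theorem~\ref{thm:pren}, and it is accomplished in Section~\ref{sec:prens} by a mechanism absent from your proposal: choose straight segments $\alpha'_i$ with the correct endpoints, let $h'$ be the push along them, write $f=\varphi\circ(h'\circ g)$ with $\varphi=h\circ h'^{-1}$ a mapping class of $(S^2,P_f)$, realize $\varphi$ by an affine map of the pillowcase (Lemma~\ref{lemma:induceup} again), and use the twisting results (Theorems~\ref{thm:twisting1}, \ref{thm:twisting2} and \ref{thm:leftaction}) to absorb $\varphi$ into the matrix $A$ and transport the straight segments accordingly. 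Without this step your argument proves only that $f$ factors through a Euclidean map, not that it admits a presentation diagram.
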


Though far from unique, such presentation diagrams are nonetheless
useful for two reasons.

First, they give simple input data for algorithms for the computation of
fundamental invariants.  Parry, with initial assistance from Floyd,
wrote and continues to improve a computer program {\tt NETmap} which
takes as input NET map presentations and outputs a wealth of
information about the NET maps.  Much of our interest in NET maps lies
in their computational tractability.

Second, there is a correspondence between twists of NET map
presentation diagrams and twists of NET maps.  The set of NET map
presentation diagrams $\mathbf{D}$ admits a natural free action by the
monoid $\text{Mat}^+(2,\bbZ)$ of all $2\times 2$ matrices of integers
with positive determinant; the action of $A\in \text{Mat}^+(2,\bbZ)$
is to simply transform the diagram by $A$.  If we ``projectivize''
presentation diagrams by identifying $\bD$ with $-\bD$, then we obtain
a free action of $\text{PMat}^+(2,\bbZ)=\text{Mat}^+(2,\bbZ)/\{\pm
1\}$ on projective classes $[\bD]=\{\bD,-\bD\}$ of presentation
diagrams.  In particular, we obtain a free action of
$\text{PSL}(2,\bbZ)$ on projective classes of presentation diagrams.
This action has an interpretation as functional post-composition,
which we now describe. (In what follows, the subscript ``2'' serves to
later distinguish the constructions from closely related ones.)  Let
$\Gamma_2=\{x \mapsto \pm x + 2\lambda: \lambda \in \mathbb{Z}^2\}$,
let $S^2_2=\mathbb{R}^2/\Gamma_2$, let $\pi_2: \mathbb{R}^2 \to
S^2_2$ be the natural projection and let $P_2=\pi_2(\Lambda_2)\subset
S^2_2$.  The group $\text{PSL}(2,\bbZ)$ is naturally identified with
the stabilizer of $\zp_2(0)$ in the mapping class group of $(S_2^2,P_2)$.  Similarly, it follows from Section~\ref{sec:euc} that
$\text{PMat}^+(2,\bbZ)\setminus \text{PSL}(2,\bbZ)$ is naturally
identified with the set of all Euclidean NET maps with postcritical
set $P_2$ which fix $\zp_2(0)$.  On the other hand, a projective class
$[\mathbf{D}]$ of presentation diagrams determines a well-defined
isotopy class of Thurston maps $F: (S^2_2, P_2) \to (S^2_2, P_2)$ with
postcritical set $P_2$ (Lemma \ref{lemma:well-defined}).  We show:

\begin{thm} \label{thm:twisting1} Suppose $A\in \text{Mat}^+(2,\bbZ)$
sends $[\mathbf{D}]$ to $[\mathbf{D'}]$. Let $F, F': (S^2_2, P_2) \to
(S^2_2, P_2)$ be the corresponding Thurston maps. Then $F'=A_2\circ
F$, where $A_2: (S^2_2,P_2) \to (S^2_2,P_2)$ is the map induced by
$A$.
\end{thm}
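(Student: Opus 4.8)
The plan is to prove the identity at the level of the orbifold universal cover $\pi_2\colon \mathbb{R}^2\to S^2_2$, where the presentation diagram actually lives, and then to descend. I would first recall the construction of Section~\ref{sec:diagram2map} in the following form: since $\mathbf{D}$ is drawn on the graph paper $\mathbb{R}^2$, the construction naturally produces an explicit map $\Theta_{\mathbf{D}}\colon \mathbb{R}^2\to\mathbb{R}^2$, and $F$ is by definition the descent of $\Theta_{\mathbf{D}}$, so that $\pi_2\circ\Theta_{\mathbf{D}}=F\circ\pi_2$. Because $F$ depends only on the projective class $[\mathbf{D}]$ (Lemma~\ref{lemma:well-defined}) and $A$ carries $[\mathbf{D}]$ to $[\mathbf{D}']=[A\mathbf{D}]$, the map $F'$ is the descent of $\Theta_{A\mathbf{D}}$. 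Throughout, the relevant notion of equality is equality of isotopy classes of maps $(S^2_2,P_2)\to(S^2_2,P_2)$ rel $P_2$.

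Next I would record the functoriality of $A$. Since $A$ has integer entries it preserves $\mathbb{Z}^2$ and $2\mathbb{Z}^2$ and commutes with $x\mapsto -x$; hence $A$ normalizes $\Gamma_2$ and descends to $A_2\colon S^2_2\to S^2_2$ with $\pi_2\circ A=A_2\circ\pi_2$, and $A(\mathbb{Z}^2)\subseteq\mathbb{Z}^2$ gives $A_2(P_2)\subseteq P_2$. Note also that $A$ and $-A$ differ by the deck transformation $x\mapsto -x\in\Gamma_2$, so they induce the same $A_2$; combined with the fact that $\mathbf{D}$ and $-\mathbf{D}$ determine the same $F$, this shows the asserted identity depends only on the projective classes, so the statement is well posed. (When $\det A=1$ the map $A_2$ is a homeomorphism and this is a genuine twist, while for $\det A>1$ it is a Euclidean NET map, matching the interpretation as post-composition described in the introduction.)

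The heart of the argument is to show that the diagram-to-map construction intertwines the action $\mathbf{D}\mapsto A\mathbf{D}$ with post-composition by $A$ on $\Theta$, that is,
\[
\Theta_{A\mathbf{D}}\;=\;A\circ\Theta_{\mathbf{D}}
\]
up to isotopy through maps of this form. Granting this, I would post-compose the covering relations with $\pi_2$ and use $\pi_2\circ A=A_2\circ\pi_2$ to compute $F'\circ\pi_2=\pi_2\circ\Theta_{A\mathbf{D}}=\pi_2\circ A\circ\Theta_{\mathbf{D}}=A_2\circ\pi_2\circ\Theta_{\mathbf{D}}=A_2\circ F\circ\pi_2$, and surjectivity of $\pi_2$ then forces $F'=A_2\circ F$. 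The equivariance itself should follow by inspection, since the data of $\mathbf{D}$ is drawn in the target plane and applying $A$ to the diagram applies $A$ to exactly that data.

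The step I expect to be the main obstacle is the precise verification of $\Theta_{A\mathbf{D}}=A\circ\Theta_{\mathbf{D}}$, namely that \emph{every} ingredient of the construction of Section~\ref{sec:diagram2map} transforms under $A$ by this single post-composition and nothing more. The subtlety is that a Thurston map is not determined by its values on the finite set $P_2$, so checking the identity on postcritical points does not suffice; one must track the full combinatorial and twisting data encoded by $\mathbf{D}$ and confirm that the construction is equivariant on the nose, and in addition confirm that the isotopy ambiguity in the lift $\Theta_{\mathbf{D}}$ descends exactly to the isotopy ambiguity rel $P_2$ permitted by Lemma~\ref{lemma:well-defined}. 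Once this equivariance is in hand, the displayed computation finishes the proof.
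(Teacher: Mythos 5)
Your overall strategy---show that the diagram-to-map construction is equivariant under $A$ and then descend---is the same as the paper's, and your bookkeeping (that $A$ normalizes $\Gamma_2$, that $\pm A$ induce the same $A_2$, and that everything depends only on projective classes) is fine. But the proposal stops exactly where the proof has to begin: you reduce the theorem to the equivariance $\Theta_{A\mathbf{D}}=A\circ\Theta_{\mathbf{D}}$, explicitly flag that identity as the main obstacle, and then assert it ``should follow by inspection.'' That identity \emph{is} the theorem, and verifying it requires unpacking the construction, which you do not do. The paper's proof uses the factorization $F=\overline{\mathrm{id}}\circ h_1\circ\phi$ through the intermediate sphere $S^2_1$ and checks each factor separately: $\Lambda_{1'}=A\cdot\Lambda_1$ gives $\phi'=A_1\circ\phi$ and $\overline{\mathrm{id}'}\circ A_1=A_2\circ\overline{\mathrm{id}}$, and---this is the decisive point you omit---$A_1$ carries the green arcs $\beta_j$ of $\mathbf{D}$ to the green arcs $\beta'_j$ of $\mathbf{D}'$, so the push homeomorphisms can be chosen to satisfy $h_{1'}=A_1\circ h_1\circ A_1^{-1}$. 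Chaining these three identities yields $F'=A_2\circ F$.

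A secondary point: the object $\Theta_{\mathbf{D}}$ you posit is not actually produced by the construction of Section~\ref{sec:diagram2map}. That construction does not give a canonical self-map of $\mathbb{R}^2$ lifting $F$; it gives the affine map $\Phi$ (which lifts $\phi\colon S^2_2\to S^2_1$, a map \emph{into the intermediate sphere}, not a self-map of $S^2_2$) together with a push homeomorphism $h_1$ of $S^2_1$ defined only up to isotopy rel $P_1$. To run your argument literally upstairs you would need to lift $h_1$ to the plane and control its isotopy ambiguity there, which is an extra complication relative to the paper's factor-by-factor check on the spheres and, once carried out, reduces to exactly the naturality statement for push maps above. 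So the route is right in outline, but the one nontrivial step is named rather than proved.
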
 

We prove Theorem~\ref{thm:twisting1} in Section~\ref{sec:twisting}.
We also obtain the corresponding result for translations in
Theorem~\ref{thm:twisting2}.  Together, they give a correspondence
between twists of projective classes of NET map presentation diagrams
and twists of NET maps by the full modular group of $F$ as well as by
all Euclidean NET maps with the same postcritical set as $F$.
\medskip

{\bf Outline.} \S \ref{sec:corre} collects some notation and technical
results. \S \ref{sec:diagram2map} defines NET map presentation
diagrams precisely and shows how a NET map presentation diagram
determines a NET map.  \S \ref{sec:twisting} proves
Theorem \ref{thm:twisting1} and the supplementary
Theorem~\ref{thm:twisting2}, which deals with translations.

The remainder of the paper is devoted to showing that general NET maps
admit NET map presentations.  The basic idea is to show first that an
arbitrary NET map $f$ has the form $f=h'\circ g$ where $g$ is a
Euclidean NET map as above and where now $h'$ is a push-point
homeomorphism along a set of possibly very complicated pairwise disjoint
arcs.  The proof of Theorem \ref{thm:pren} then exploits the fact that
$h'$ can be simplified at the expense of modifying the affine map $g$.  

\S \ref{sec:euc} discusses Euclidean NET maps in detail and shows that
they arise as compositional factors of general NET maps.  \S
\ref{sec:prelim} and \S \ref{sec:relate} attach preliminary data to NET
maps and show how this changes under twisting. \S \ref{sec:prens}
proves our main result. The proof we give of Theorem \ref{thm:pren},
while constructive in principle, is nevertheless somewhat implicit. In
Section~\ref{sec:algo} we give an algorithm for finding NET map
presentations, and illustrate it for two examples: the Douady rabbit
quadratic polynomial, and the cubic rational map studied by Lodge
\cite{l}.

Following this paper, \cite{fpp2} investigates modular groups, Hurwitz
classes and dynamic portraits of NET maps.  Next, \cite{fkklpps} is
partly a survey of what we know, but mostly a compilation of special
NET map results, both theoretical and computational.  The NET map web
site \cite{NET} contains papers, a census of many examples, and
executable files for the computer program {\tt NETmap} that generated
the census data.

\subsection*{Acknowledgements}
The authors gratefully acknowledge support from the American Institute
for Mathematics.  Kevin Pilgrim was also supported by Simons grant \#245269.

\section{Lattices, tori, spheres, and affine maps}
\label{sec:corre}\nosubsections

A lattice $\Lambda<\mathbb{R}^2$ determines a group of isometries
$\Gamma_\Lambda:=\{x \mapsto 2\lambda \pm x : \lambda \in \Lambda\}$
generated by 180-degree rotations about elements of $\Lambda$ and a
quotient map $\pi_\Lambda\co
\mathbb{R}^2\to\mathbb{R}^2/\Gamma_\Lambda$.  Given an ordered basis
$(\lambda, \mu)$ for $\Lambda$, the parallelogram in $\mathbb{R}^2$
spanned by $2\lambda$ and $2\mu$ is a fundamental domain for the
quotient torus $\mathbb{R}^2/2\Lambda$; the parallelogram spanned by
$2\lambda$ and  $\mu$ is a fundamental domain $F_\Lambda$ for the quotient
sphere $\mathbb{R}^2/\Gamma_\zL$.

The following two lemmas will be used repeatedly in the sequel; the
proofs are straightforward and are omitted.

\begin{lemma}\label{lemma:inducedown} Let $\zL$ and $\zL'$ be lattices
in $\mathbb{R}^2$.  Let $\zG$, respectively $\zG'$, be the groups of
Euclidean isometries of the form $x\mapsto 2\zl\pm x$ for some $\zl\in
\zL$, respectively $\zl\in \zL'$.  Also let $\zp\co \mathbb{R}^2\to
\mathbb{R}^2/\zG$ and $\zp'\co \mathbb{R}^2\to \mathbb{R}^2/\zG'$ be
the canonical quotient maps.  Let $\zF\co \mathbb{R}^2\to
\mathbb{R}^2$ be an affine isomorphism such that $\zF(\zL)\subseteq
\zL'$.  Then $\zF$ induces a branched covering map $\zf\co
\mathbb{R}^2/\zG\to \mathbb{R}^2/\zG'$ such that $\zf\circ
\zp=\zp'\circ \zF$ as in Figure~\ref{fig:induce}.  The map $\zF$
preserves orientation if and only if $\zf$ preserves orientation.  The
set $\zF(\zL)$ is a coset of a sublattice $\zL''$ of $\zL'$, and the
degree of $\zf$ equals the index $[\zL':\zL'']$.
\end{lemma}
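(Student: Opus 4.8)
The plan is to write the affine isomorphism as $\zF(x)=Lx+b$ with $L$ an invertible linear map and $b=\zF(0)$, verify that $\zF$ conjugates $\zG$ into $\zG'$, and then read off everything from standard quotient arguments. First I would record that since $0\in\zL$, the hypothesis $\zF(\zL)\subseteq\zL'$ forces $b=\zF(0)\in\zL'$, whence $L\zL=\zF(\zL)-b\subseteq\zL'-b=\zL'$. Thus $\zL'':=L\zL$ is a full-rank sublattice of $\zL'$ and $\zF(\zL)=b+\zL''$ is a coset of it, which already gives the first half of the final sentence.

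Next I would check equivariance. For $\gamma\in\zG$ written $\gamma(x)=2\zl\pm x$ with $\zl\in\zL$, a direct computation gives $\zF\circ\gamma\circ\zF^{-1}(y)=2(L\zl)+y$ in the $+$ case and $2(L\zl+b)-y$ in the $-$ case. Since $L\zl\in\zL''\subseteq\zL'$ and $b\in\zL'$, both results lie in $\zG'$, so $\zF\zG\zF^{-1}\subseteq\zG'$. This is the one computation that needs genuine care, and I regard it as the crux of the argument; once it is in hand, $\zF$ carries $\zG$-orbits into $\zG'$-orbits, so the rule $\zf(\zp(x)):=\zp'(\zF(x))$ is well defined and satisfies $\zf\circ\zp=\zp'\circ\zF$.

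To see that $\zf$ is a branched covering I would factor it: $\zF$ descends to a homeomorphism $\bar\zF\co\bbR^2/\zG\to\bbR^2/\zF\zG\zF^{-1}$, and under this identification $\zf$ becomes the natural projection $\bbR^2/\zF\zG\zF^{-1}\to\bbR^2/\zG'$ induced by the inclusion $\zF\zG\zF^{-1}\subseteq\zG'$. Both groups act properly discontinuously on $\bbR^2$ with quotient a sphere and with point stabilizers of order at most two, so this projection is a local homeomorphism away from the finitely many cone points and is locally modeled on $z\mapsto z^k$ near them; this is precisely a branched covering.

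For the degree, the translation subgroup of $\zG'$ is $2\zL'$, of index two, and conjugation carries the translation subgroup of $\zG$ onto that of $\zF\zG\zF^{-1}$, which by the computation above is $2\zL''$, again of index two. Over a generic (non-cone) point the fiber of the projection has cardinality $[\zG':\zF\zG\zF^{-1}]$, and comparing translation subgroups of index two in each group yields $[\zG':\zF\zG\zF^{-1}]=[2\zL':2\zL'']=[\zL':\zL'']$, so $\deg\zf=[\zL':\zL'']$. The orientation statement is then immediate: every element of $\zG$ and of $\zG'$ is orientation preserving (translations and $180$-degree rotations both have positive determinant), so the standard orientation of $\bbR^2$ descends compatibly through $\zp$ and $\zp'$, and $\zf$ preserves orientation exactly when $\zF$ does, that is, when $\det L>0$. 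Apart from the conjugation identity and the index bookkeeping, no step presents a real obstacle.
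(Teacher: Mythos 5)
Your argument is correct and complete: the conjugation identity $\zF\zG\zF^{-1}\subseteq\zG'$ (using $b=\zF(0)\in\zL'$), the factorization through $\bbR^2/\zF\zG\zF^{-1}$, and the index computation $[\zG':\zF\zG\zF^{-1}]=[\zL':\zL'']$ via the index-two translation subgroups are exactly the standard steps. The paper omits this proof as ``straightforward,'' and your write-up is the expected argument, so there is nothing to contrast.
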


\begin{figure}
  \begin{equation*}
\xymatrix{\mathbb{R}^2\ar[d]^{\zp}\ar[r]^\zF &
{\mathbb{R}^2}\ar[d]^{\zp'}\\
\mathbb{R}^2/\zG \ar[r]^\zf & \mathbb{R}^2/\zG'}
  \end{equation*}
 \caption{The maps of Lemmas~\ref{lemma:inducedown} and
\ref{lemma:induceup}}
\label{fig:induce}
\end{figure}

\begin{lemma}\label{lemma:induceup} Let $\zL$ and $\zL'$ be lattices
in $\mathbb{R}^2$.  Let $\zG$, respectively $\zG'$, be the group of
Euclidean isometries of the form $x\mapsto 2\zl\pm x$ for some $\zl\in
\zL$, respectively $\zL'$.  Also let $\zp\co \mathbb{R}^2\to
\mathbb{R}^2/\zG$ and $\zp'\co \mathbb{R}^2\to \mathbb{R}^2/\zG'$ be
the canonical quotient maps.  Let $\zf\co \mathbb{R}^2/\zG\to
\mathbb{R}^2/\zG'$ be a branched covering map such that
$\zf(\zp(\zL))\subseteq \zp'(\zL')$.
Then we have the following three statements.
\begin{enumerate}
  \item There exists a homeomorphism $\zF\co \mathbb{R}^2\to
\mathbb{R}^2$ such that the restriction of $\zF$ to $\zL$ is affine
and $\zf\circ \zp=\zp'\circ\zF$.  If $\zp'(0)\in \zf(\zp(\zL))$, then
$\zF(\zL)$ is a sublattice of $\zL'$.
  \item There exists an affine isomorphism $\zJ\co \mathbb{R}^2\to
\mathbb{R}^2$ such that the branched map of
Lemma~\ref{lemma:inducedown} which $\zJ$ induces from
$\mathbb{R}^2/\zG$ to $\mathbb{R}^2/\zG'$ is $\zf$ up to isotopy rel
$\zp(\zL)$.  If $\zp'(0)\in \zf(\zp(\zL))$, then $\zJ(\zL)$ is a
sublattice of $\zL'$.
  \item The maps $\zF$ and $\zJ$ are unique up to precomposing with an
element of $\zG$.  They are also unique up to postcomposing with an
element of $\zG'$.
\end{enumerate}
\end{lemma}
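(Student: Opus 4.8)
The plan is to construct $\zF$ as a lift of the branched covering $g:=\zf\circ\zp\co\bbR^2\to\bbR^2/\zG'$ through the quotient map $\zp'$, using covering space theory adapted to the orbifold structure, and then to read off the affine and uniqueness assertions from the equivariance of the lift. Throughout I regard $\zp$ and $\zp'$ as the universal orbifold coverings of the Euclidean $(2,2,2,2)$-orbifolds $\bbR^2/\zG$ and $\bbR^2/\zG'$: away from $\zL$, $\zL'$ they are genuine covering maps with deck groups $\zG$, $\zG'$, while over $\zp(\zL)$, $\zp'(\zL')$ they are the local model $z\mapsto z^2$ coming from the order-two rotations fixing the lattice points. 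To build $\zF$ I fix basepoints $x_0\in\bbR^2$ and $w_0$ with $\zp'(w_0)=g(x_0)$, and define $\zF$ by path lifting: join $x_0$ to $x$ by a path, push it forward by $g$, lift through $\zp'$ starting at $w_0$, and let $\zF(x)$ be the endpoint. Since $\bbR^2$ is simply connected this is independent of the path (loops encircling the isolated points of $g^{-1}(\zp'(\zL'))$ are handled by continuity), so $\zF$ is a well-defined continuous lift with $\zf\circ\zp=\zp'\circ\zF$.

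The main point is that $\zF$ is a homeomorphism, and this is where the orbifold structure is essential. Away from $g^{-1}(\zp'(\zL'))$ both $g$ and $\zp'$ are local homeomorphisms, so $\zF$ is too. At a point $x$ over $\zp'(\zL')$ I would compute local degrees: if $x\in\zL$ then $\zp$ contributes local degree $2$ while $\zf$ contributes local degree $1$ (a cone point mapping unramified to a cone point), whereas if $x\notin\zL$ then $\zp$ is a local homeomorphism and $\zf$ has local degree $2$ (a regular point mapping to a cone point); in both cases $g$ is locally $z\mapsto z^2$, matching the model $\zeta\mapsto\zeta^2$ of $\zp'$, so the lift is locally $z\mapsto\pm z$. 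Thus $\zF$ is a local homeomorphism everywhere; it is proper because (by the equivariance below) $\zF(x+2\zl)=\zF(x)+2L\zl$ with $L$ injective, so $|\zF(x)|\to\infty$, and a proper local homeomorphism onto the simply connected plane is a homeomorphism. I expect this local-degree bookkeeping at the branch points to be the main obstacle, since it is exactly where one needs $\zf$ to respect the order-two cone points of the two pillowcases: if $\zf$ were only a topological branched cover the lift could itself be branched.

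For the equivariance and the affine claim, note that $g\circ\zg=\zf\circ\zp\circ\zg=g$ for $\zg\in\zG$, so $\zF\circ\zg$ is again a lift of $g$ and hence equals $\sigma(\zg)\circ\zF$ for a unique $\sigma(\zg)\in\zG'$, giving a homomorphism $\sigma\co\zG\to\zG'$. Applying this to the rotation $\rho_\zl\co x\mapsto 2\zl-x$ shows $\zF(\zl)$ is the fixed point of the order-two element $\sigma(\rho_\zl)$, hence lies in $\zL'$; and combining $\rho_\zl\circ\rho_0=(x\mapsto x+2\zl)$ with the homomorphism property yields $\zF(\zl)=L\zl+\zF(0)$, where $L$ is the integral linear map determined by $\sigma$ on translations. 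Thus $\zF|_\zL$ is affine with $L(\zL)\subseteq\zL'$, proving (1); when $\zp'(0)\in\zf(\zp(\zL))$ one normalizes the basepoints so that $\zF(0)=0$ and $\zF(\zL)=L(\zL)$ is a sublattice of $\zL'$. For (2), let $\zJ\co x\mapsto Lx+\zF(0)$ be the affine map agreeing with $\zF$ on $\zL$; by Lemma~\ref{lemma:inducedown} it induces a branched covering $\zj$, and $\zJ$ is $\sigma$-equivariant by the same computation. The straight-line homotopy $H_t=(1-t)\zF+t\zJ$ is then $\sigma$-equivariant and fixes $\zL$ pointwise, so it descends to a homotopy rel $\zp(\zL)$ from $\zf$ to $\zj$, which is the asserted equivalence up to isotopy rel $\zp(\zL)$, with $\zJ(\zL)=\zF(\zL)$ giving the sublattice statement.

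Finally, (3) is covering space lift uniqueness. If $\zF_1,\zF_2$ both satisfy $\zp'\circ\zF_i=g$ then $\zF_1(x)$ and $\zF_2(x)$ lie in a common $\zG'$-orbit for each $x$; by continuity and discreteness the connecting element is constant, so $\zF_2=\zg'\circ\zF_1$ for a single $\zg'\in\zG'$, which is uniqueness up to postcomposition with $\zG'$. Precomposition with any $\zg\in\zG$ sends a lift to a lift via $\zF\circ\zg=\sigma(\zg)\circ\zF$, giving the precomposition statement, and the corresponding assertions for $\zJ$ follow because precomposing an affine lift by $\zG$ or postcomposing by $\zG'$ leaves the induced map $\zj$ unchanged.
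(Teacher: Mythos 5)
The paper gives no proof of this lemma at all (it declares the proofs of Lemmas~\ref{lemma:inducedown} and \ref{lemma:induceup} straightforward and omits them), so there is nothing to compare against line by line; your equivariant-lifting strategy is certainly the intended one, and several pieces are done correctly: the homomorphism $\sigma\co\Gamma\to\Gamma'$ obtained from uniqueness of lifts, reading off $\Phi(\lambda)\in\Lambda'$ as the fixed point of the order-two element $\sigma(x\mapsto 2\lambda-x)$, deducing affineness of $\Phi|_{\Lambda}$ from the relation $(x\mapsto 2\lambda-x)\circ(x\mapsto -x)=(x\mapsto x+2\lambda)$, and the postcomposition uniqueness. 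The genuine gap is exactly the step you yourself call ``the main obstacle'': you assert, without proof, that $\phi$ has local degree $1$ at each point of $\pi(\Lambda)$, local degree $2$ at every other preimage of $\pi'(\Lambda')$, and no critical points elsewhere. These statements do not follow from the stated hypothesis, which only requires that the four-point set $\pi(\Lambda)$ be carried into $\pi'(\Lambda')$. Concretely, take $\phi$ of degree $2$ modeled on $z\mapsto z^2$ with both critical values placed off $\pi'(\Lambda')$ and with the four points of $\pi(\Lambda)$ chosen among the eight regular preimages of $\pi'(\Lambda')$: then $\phi\circ\pi$ has local degree $2$ at points whose image avoids $\pi'(\Lambda')$, whereas $\pi'\circ\Phi$ has local degree $2$ only over $\pi'(\Lambda')$, so no homeomorphic lift exists. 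Thus the local-degree conditions are an extra hypothesis (that $\phi$ is a covering of the $(2,2,2,2)$-orbifolds), and your proof must either add it or restrict to the cases the paper actually uses (where $\phi$ is a homeomorphism, or a Euclidean NET map as in \cite{cfpp}), where the conditions are easily checked. Relatedly, your justification of path-independence ``since $\mathbb{R}^2$ is simply connected'' is not right as written: the lifting happens through the genuine covering obtained by restricting $\pi'$ over the complement of $\pi'(\Lambda')$, whose base is not simply connected; the correct criterion is that each small loop about a point of $(\phi\circ\pi)^{-1}(\pi'(\Lambda'))$ map to an even power of a cone-point loop, which is again the same local-degree condition.

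Two smaller points. First, in statement (2) the straight-line homotopy $H_t=(1-t)\Phi+t\Psi$ need not be injective for intermediate $t$, so what descends is a homotopy rel $\pi(\Lambda)$, not an isotopy; to conclude ``up to isotopy'' you must invoke the standard fact that homotopic branched covers of surfaces, rel a set containing the branch values, are isotopic. Second, in statement (3) your lift-uniqueness argument does prove uniqueness up to postcomposition by $\Gamma'$, but your argument for precomposition only shows that each $\Phi\circ\gamma$ is again a lift; since $\{\Phi\circ\gamma:\gamma\in\Gamma\}=\{\sigma(\gamma)\circ\Phi:\gamma\in\Gamma\}$ and $\sigma(\Gamma)$ has index $\deg\phi$ in $\Gamma'$, the precomposition orbit is strictly smaller than the full set of lifts whenever $\deg\phi>1$, so the two uniqueness assertions are not interchangeable and you should state precisely which one your argument delivers.
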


\section{From presentations and diagrams to maps}
\label{sec:diagram2map}

In all that follows, $\Lambda_2$ will denote the standard integer
lattice $\mathbb{Z}^2 < \mathbb{R}^2$, and $\Phi: \mathbb{R}^2 \to
\mathbb{R}^2$ will denote an affine map $x \mapsto Ax+b$ with the
property that $\Lambda_1:=\Phi(\Lambda_2)$ is a proper sublattice of
$\Lambda_2$. For $i=1,2$ we denote by $\pi_i$ the corresponding
quotient map as in \S \ref{sec:corre} and by $S^2_i$ the corresponding
quotient sphere; we put $P_i:=\pi_i(\Lambda_i)$.  The sphere $S_i^2$
has an affine structure, making it a half-translation sphere with set
of corner points $P_i$.

Precisely described, the data of a NET map presentation with
associated diagram $\bD$ consists of three things. Here are the first
two:
\begin{enumerate} 
  \item An ordered basis $(\zl_1, \zl_2)$ for $\Lambda_1$. This is
indicated in $\bD$ by drawing a parallelogram $F_1$ with sides joining
the origin to lattice points $2\zl_1, \zl_2$.  The choice between the
two possible orderings of the basis elements is uniquely determined by
the condition that the determinant of the column matrix $A=[\zl_1,
\zl_2]$ is positive. The intersection $\{0, \zl_1, 2\zl_1, \zl_2,
\zl_1+\zl_2, 2\zl_1+\zl_2\}$ of $\Lambda_1$ with $F_1$ is indicated in
$\bD$ by large solid dots; note that $P_1=\pi_1(\Lambda_1\cap F_1)$.
  \item An element $b \in \{0, \zl_1, \zl_2, \zl_1+\zl_2\}$.  It is
indicated in $\bD$ by circling the corresponding point in the boundary
of the parallelogram $F_1$.  
\end{enumerate}

The data of (1) and (2) above determines uniquely an
orientation-preserving affine isomorphism $\Phi: \mathbb{R}^2 \to
\mathbb{R}^2$ sending $\Lambda_2$ isomorphically to $\Lambda_1$ and
sending the origin to $b$; thus it is linear iff $b=0$.

From \S \ref{sec:corre}, we have the following.  Here,
$\mathbb{R}^2_1=\mathbb{R}^2_2=\mathbb{R}^2$; the subscripts are for
clarity.  We set $S^2_i=\mathbb{R}_i^2/\Gamma_i$ for $i=1,2$.  The
identity map $\mathrm{id}: \mathbb{R}^2_1 \to \mathbb{R}^2_2$ induces
a degree $d:=[\Lambda_2:\Lambda_1]$ orientation-preserving branched
covering $\overline{\mathrm{id}}: (S^2_1, P_1)\to (S^2_2,P_2)$.  (Not
only do we have that $\overline{\text{id}}(P_1)\subseteq P_2$ but even
that $\overline{\text{id}}(\zp_1(\zL_2))=P_2$.)  The affine
isomorphism $\Phi: \mathbb{R}_2^2 \to \mathbb{R}_1^2$ induces an
isomorphism $\phi: (S^2_2, P_2) \to (S^2_1, P_1)$. The composition
$\phi \circ \overline{\text{id}}: (S^2_1,P_1) \to (S^2_1, P_1)$, is a
Euclidean Thurston map, $g$. See Figure \ref{fig:defg}.  The lower
vertical arrows are the identity map.

\begin{figure}
  \begin{equation*}
\xymatrix{ 
\mathbb{R}_1^2\ar[d]^{\pi_1}\ar[r]^{\text{id}} &
\mathbb{R}_2^2\ar[d]^{\pi_2}\ar[r]^\zF & 
\mathbb{R}_1^2\ar[d]^{\pi_1} \\
\mathbb{R}_1^2/\zG_1\ar[d]^{id}\ar[r]^{\overline{\text{id}}} & 
\mathbb{R}_2^2/\zG_2\ar[r]^\zf & 
\mathbb{R}_1^2/\zG_1\ar[d]^{id} \\
S^2_1\ar[rr]^g & & S^2_1}
  \end{equation*}
 \caption{ Defining $g$}
\label{fig:defg}
\end{figure}

We call the data in (1) and (2) above {\em Euclidean NET map data}.
In the following \S \ref{sec:euc}, we show that up to equivalence, every
Euclidean NET map arises in this way.

The third and final ingredient in a NET map presentation is the
following. Recall that the four corners of the half-translation sphere
form the set $P_1=\pi_1(\Lambda_1)$, and that $\Lambda_1 \cap F_1 = \{0,
\zl_1, 2\zl_1, \zl_2, \zl_1+\zl_2, 2\zl_1+\zl_2\}$.  For convenience
of exposition, we denote $P_1=\{p_1, p_2, p_3, p_4\}$; the choice of
such a labeling is not part of the data of a NET map presentation.

\begin{enumerate} \setcounter{enumi}{2} \item For each $p_j \in P_1$,
a ``green'' (local) geodesic arc $\zb_j$ joining $p_j$ to an element of
$\pi_1(\Lambda_2)$, such that the geodesics $\zb_j$ are pairwise
disjoint.  We allow the case of degenerate (zero length) geodesics.
We require that every $\zb_j$ lifts to a line segment in $F_1$.  The
choice of the $\zb_j$'s is represented in $\bD$ by drawing all of
their lifts in the closed fundamental domain $F_1$ as green line
segments.  Thus there may be up to eight such segments.
\end{enumerate}
Let $P$ be the set of four points in $S_1^2$ consisting of the
terminal endpoints of the $\zb_j$'s.  The data in (3) determines a
{\em push-point homeomorphism} $h: (S_1^2, P_1) \to (S_1^2, P)$,
unique up to isotopy rel $P_1$, by pushing the four points of $P_1$
along the (possibly trivial) oriented green geodesics $\zb_j$.

The result below follows immediately from straightforward arguments.

\begin{lemma} \label{lemma:well-defined} The map $f=h\circ g=h\circ
\zf\circ \overline{\text{id}}\co (S_1^2,P)\to (S_1^2,P)$ is a Thurston
map; it is a NET map if $\#P_f=4$.  Once the data (1), (2), and (3)
are fixed, the following are uniquely determined:
\begin{enumerate} 
  \item the subset $P_f\subset P\subset S^2_1$; 
  \item the isotopy class of $f$ relative to $P\subset S^2_1$; 
  \item the isotopy class of the conjugate
$F:=\overline{\mathrm{id}}\circ f\circ \overline{\mathrm{id}}^{-1} =
\overline{\mathrm{id}}\circ h \circ \phi: (S_2^2,P_2) \to (S_2^2,
P_2)$; $P_F \subset P_2$ and equality holds iff $F$ (equivalently,
$f$) is a NET map; 
  \item the equivalence class of $f$.
\end{enumerate}
\end{lemma}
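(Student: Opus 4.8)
The plan is to exploit that the affine data (1),(2) pin down $\Phi$, and hence the Euclidean map $g=\phi\circ\overline{\mathrm{id}}$, as a \emph{specific} branched covering (via Lemmas~\ref{lemma:inducedown} and~\ref{lemma:induceup} and the uniqueness of $\Phi$), while the arc data (3) pin down the push-point homeomorphism $h\co (S_1^2,P_1)\to(S_1^2,P)$ only up to isotopy rel $P_1$. Since $h$ is a homeomorphism, $f=h\circ g$ is a branched covering of degree $d=[\Lambda_2:\Lambda_1]$ whose critical set coincides with that of $\overline{\mathrm{id}}$; these critical points are simple and located at $\pi_1(\Lambda_2\setminus\Lambda_1)$. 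Thus $f$ is a Thurston map once its postcritical set is seen to be finite and forward invariant, and it is NET precisely when $\#P_f=4$.

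First I would verify the containment underlying claim~(1). Because $\overline{\mathrm{id}}(\pi_1(\Lambda_2))=P_2$ and $\phi(P_2)=P_1$, one computes $g(P)\subseteq g(\pi_1(\Lambda_2))=\phi(P_2)=P_1$, whence $f(P)=h(g(P))\subseteq h(P_1)=P$; combined with the fact that the critical values of $g$ lie in $\phi(P_2)=P_1$ (so those of $f$ lie in $h(P_1)=P$), this gives $P_f\subseteq P$. For \emph{determinacy} of the actual point set $P_f$, I would note that $g$ is a fixed map and that $h$ sends each $p_j$ to the prescribed terminal endpoint of $\zb_j$; hence $f|_P=h|_{P_1}\circ g|_P\co P\to P$ is a well-defined self-map of the finite set $P$, independent of the admissible choices, and $P_f$—the forward $f|_P$-orbit of the determined critical-value set—is a determined subset of $P$.

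Next I would treat the isotopy statements (2) and (3). Any two admissible push-point homeomorphisms $h,h'$ are isotopic rel $P_1$; writing $H_t$ for such an isotopy, the families $H_t\circ g$ and $\overline{\mathrm{id}}\circ H_t\circ\phi$ are isotopies from $f$ to $f'$ and from $F$ to $F'$ respectively. Since $H_t$ is constant on $P_1$ while $g(P)\subseteq P_1$ and $\phi(P_2)=P_1$, these families restrict to \emph{constant} families on $P$, resp.\ on $P_2$; hence the isotopy class of $f$ rel $P$ and of $F$ rel $P_2$ depend only on the data. At this point I would also record the identity $\overline{\mathrm{id}}\circ f=F\circ\overline{\mathrm{id}}$, which simultaneously shows that the two displayed formulas for $F$ agree and that $\overline{\mathrm{id}}$ is a branched semiconjugacy from $f$ to $F$.

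Finally, rerunning the Paragraph-2 computation for $F=\overline{\mathrm{id}}\circ h\circ\phi$ yields critical values $\overline{\mathrm{id}}(\pi_1(\Lambda_2\setminus\Lambda_1))\subseteq P_2$ and $F(P_2)=\overline{\mathrm{id}}(P)\subseteq P_2$, so $P_F\subseteq P_2$, with $F$ NET iff $\#P_F=4$ iff $P_F=P_2$. Claim~(4) is then immediate, since Thurston (combinatorial) equivalence is coarser than isotopy rel $P$, so the determinacy established in Paragraph~3 forces the equivalence class of $f$ to depend only on the data. The one genuinely delicate point—and the main obstacle—is the assertion that $f$ is NET iff $F$ is, i.e.\ that $\#P_f=4\iff\#P_F=4$. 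This does not follow formally from $P_f\subseteq P$ and $P_F\subseteq P_2$ alone: it must be extracted from the semiconjugacy $\overline{\mathrm{id}}\circ f=F\circ\overline{\mathrm{id}}$ by tracking how $\overline{\mathrm{id}}$ transports the postcritical dynamics, and in particular by accounting for the critical values of $\overline{\mathrm{id}}$ itself (the points $\pi_2(\Lambda_2\setminus\Lambda_1)$), which is where the comparison of the two postcritical cardinalities has to be carried out with care.
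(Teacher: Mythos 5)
Your argument for the Thurston-map property, the containments $P_f\subseteq P$ and $P_F\subseteq P_2$, and the determinacy claims (1)--(4) is sound and matches what the paper intends (the paper offers no written proof, asserting only that the lemma "follows immediately from straightforward arguments"). The genuine gap is the one you yourself flag and then leave unresolved: the assertion that $\#P_f=4$ if and only if $\#P_F=4$, i.e.\ that $f$ is NET iff $F$ is. As written, your proof establishes everything except this equivalence, and your closing paragraph correctly observes that it does not follow from the containments alone --- but it then stops, so the lemma is not fully proved.

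The missing step is, however, a one-line observation rather than a delicate computation, and your instinct to route it through the semiconjugacy is slightly misdirected: the relation $\overline{\mathrm{id}}\circ f=F\circ\overline{\mathrm{id}}$ only yields $P_F=\overline{\mathrm{id}}(\Omega_f)\cup\overline{\mathrm{id}}(P_f)$, where $\Omega_f$ denotes the critical set, and since $\overline{\mathrm{id}}$ is not injective on $\pi_1(\Lambda_2)$ this does not control cardinalities. The clean argument goes the other way around the cycle. Write $a=\overline{\mathrm{id}}\colon S^2_1\to S^2_2$ and $b=h\circ\phi\colon S^2_2\to S^2_1$, so that $f=b\circ a$ and $F=a\circ b$ with $b$ a homeomorphism. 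Then $\Omega_f=\Omega_a$, $\Omega_F=b^{-1}(\Omega_a)$, and $(b\circ a)^{\circ n}(\Omega_a)=b\bigl((a\circ b)^{\circ n}(\Omega_F)\bigr)$ for every $n\ge 1$; taking unions gives $P_f=b(P_F)$. Since $b$ is a homeomorphism, $\#P_f=\#P_F$, and combined with $P_f\subseteq P$, $P_F\subseteq P_2$, $\#P=\#P_2=4$, and the fact that all critical points of both maps are simple, this yields exactly the claimed equivalence $P_f=P\iff P_F=P_2\iff f$ and $F$ are NET. With this inserted, your proof is complete.
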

If $\#P_f=4$ (equivalently, if $P_f=P$) then we call the data in
(1), (2), (3) above {\em NET map data} and we say that (1), (2), (3)
give a NET map presentation for $f$.  This presentation  is encoded in
the diagram $\bD$.
\medskip

\noindent{\bf Remarks:}
\begin{enumerate} \item The dynamical plane---the sphere $S^2_1$---and
postcritical set of $f$ depend on the data in (1), (2), (3), while
those of its conjugate $F$ do not. The former representation is more
convenient for computations of dynamical invariants like slope
functions, while the latter is more convenient in other situations
since the set of isotopy classes of Thurston maps with a common
postcritical set $P_2$ is naturally a biset over the pure mapping
class group $P\Gamma(2)$.

\item Suppose a diagram $\mathbf{D}$ is fixed; let
$P\Gamma(2).[\mathbf{D}]$ be the orbit of its projective
class under the natural free action of the pure mapping class group
$P\Gamma(2)$.  Theorem \ref{thm:twisting1} implies that the image of
the orbit is a subset of isotopy classes of NET maps that naturally
forms an irreducible biset $\mathcal{F}$ over the group $P\Gamma(2)$
in the sense of \cite{N}.  If some (equivalently, any) map in this
orbit is not Euclidean, then the natural map $[\mathbf{D}]
\mapsto F$ is never injective on the level of isotopy classes, since
by \cite[Theorem 6.3]{kps} there will always be elements $A$ for which
$A_2\circ F=F$ up to isotopy.
\end{enumerate}

\section{Affine twisting of diagrams}
\label{sec:twisting}

In this section, we prove Theorems~\ref{thm:twisting1} and
\ref{thm:twisting2}.

Let $\mathbf{D}$ be a NET map presentation diagram, $f: (S^2_1, P) \to
(S^2_1, P)$ the corresponding NET map with postcritical set $P$, and
$F: (S^2_2, P_2) \to (S^2_2, P_2)$ the corresponding conjugate. Recall
from Lemma \ref{lemma:well-defined} that the isotopy class of $F$
relative to $P_2$ is independent of the choice used in the definition
of push homeomorphism.  Let $A \in \text{Mat}^+(2,\bbZ)$, let
$\mathbf{D}':=A\cdot\mathbf{D}$ be the NET map presentation diagram
obtained by applying the linear map induced by $A$ to the diagram
$\mathbf{D}$, and let $F': (S^2_2, P_2) \to (S^2_2, P_2)$ be the
corresponding NET map.

\begin{proof}[Proof of Theorem~\ref{thm:twisting1}] For clarity of
exposition, we use primes to distinguish objects associated with
$\mathbf{D}'$, and, as usual, subscripts $1$ and $2$ to distinguish
between quotients associated to lattices $\Lambda_1, \Lambda_2$. 

We have $\Lambda_\onep=A\cdot\Lambda_1$ by the definition of the
action, so the diagram in Figure~\ref{fig:ids} commutes.  Hence,
abusing notation, $\Phi' = A\circ \Phi$ and $\phi' = A_1\circ \phi$.

\begin{figure}
\centerline{\xymatrix{
(\mathbb{R}^2, \Lambda_1)\ar[r]^{A\cdot}\ar[d]_{\pi_1}& 
(\mathbb{R}^2, \Lambda_{\onep})\ar[d]^{\pi_\onep}\\
(S^2_1, P_1)\ar[r]^{A_1}\ar[d]_{\overline{\mathrm{id}}}& 
(S^2_\onep, P_\onep)\ar[d]^{\overline{\mathrm{id}'}}\\
(S^2_2, P_2) \ar[r]^{A_2}&(S^2_2, P_2) \\
}}
\caption{Maps induced by the matrix $A$}
\label{fig:ids}
\end{figure}

We next consider the effect on the push homeomorphism. Again by the
definition of the action, $A_1$ sends the green arcs $\zb_j$ to the
green arcs $\zb'_j$. It follows that the push homeomorphisms $h_1$
and $h_\onep$ (subscript 1's added for clarity) in the definitions of
$f$ and of $f'$ may be chosen so that 
\begin{equation}
\label{eqn:h}
h_\onep  = A_1\circ  h_1\circ  A_1^{-1}.
\end{equation}

Now consider the diagram in Figure~\ref{fig:FF'}.  The triangle commutes
because $\phi' = A_1\circ \phi$.  Line~\ref{eqn:h} shows that the left
square commutes.  The bottom square of Figure~\ref{fig:ids} shows that
the right square commutes.  Thus the diagram in Figure~\ref{fig:FF'}
commutes, and so $F'=A_2\circ F$.  

This provesTheorem~\ref{thm:twisting1}.

\begin{figure}
\centerline{\xymatrix@R=.5pc{ 
& S_1^2\ar[dd]^{A_1}\ar[r]^{h_1} &
S_1^2\ar[dd]^{A_1}\ar[r]^{\overline{\text{id}}} & 
S_2^2\ar[dd]^{A_2} \\
S_2^2\ar[ur]_{\zf}\ar[dr]^{\zf'}\ar@/^3pc/[urrr]^(.6)F
\ar@/_3pc/[drrr]_(.6){F'} \\
& S_{1'}^2\ar[r]^{h_{1'}} &
S_{1'}^2\ar[r]^{\overline{\text{id}'}} & S_2^2}}
\caption{Proving Theorem~\ref{thm:twisting1}}
\label{fig:FF'}
\end{figure}

\end{proof}

The rest of this section is devoted to the analog of
Theorem~\ref{thm:twisting1} for translations.  We maintain the
notation $\zL_2$, $\zG_2$, $\zp_2$, $S_2^2$ and $P_2$ of
Theorem~\ref{thm:twisting1}.  Now let $b$ be a vector in
$\zL_2=\bbZ^2$.  The translation $x\mapsto x+b$ on $\bbR^2$ induces a
homeomorphism $b_2\co (S_2^2,P_2)\to (S_2^2,P_2)$, an element of the
modular group of $(S_2^2,P_2)$.  Because $\zG_2$ contains all
translations by elements in $2\zL_2$, this homeomorphism depends only
on the coset $b+2\zL_2$ in $\zL_2/2\zL_2$.  We identify $\zL_2/2\zL_2$
with $\bbZ_2^2$ by means of the canonical group homomorphism from
$\bbZ^2$ to $\bbZ_2^2$.  In this way $b$ determines an element
$\overline{b}\in \bbZ_2^2$.
We obtain another element of $\bbZ_2^2$ from a NET map presentation
diagram $\bD$ as follows.  Let $\zL_1$ be the lattice associated to
$\bD$ with ordered basis $(\zl_1,\zl_2)$.  One of the lattice points
$\{0, \zl_1, \zl_2, \zl_1+\zl_2\}$ in the boundary of $\bD$ is
circled.  Call it $a$.  There is a group homomorphism $\zL_1\to
\bbZ_2^2$ which sends $\zl_1$ to $(1,0)$ and $\zl_2$ to $(0,1)$. Let
$\overline{a}$ denote the image in $\bbZ_2^2$ of $a$ under this group
homomorphism.  Now we have an element $\overline{b}$ of $\bbZ_2^2$
associated to $b$ and an element $\overline{a}$ of $\bbZ_2^2$
associated to $\bD$.  We obtain a new NET map presentation diagram
$\bD'$ such that $\bD'$ is the same as $\bD$ except that now the
translation term corresponding to $\bD'$ is
$\overline{b}+\overline{a}$.
The map $\bD\mapsto
\bD'$ extends to projective classes $[\bD]\mapsto [\bD']$.  We have:

\begin{thm}\label{thm:twisting2} Suppose $b\in \bbZ^2$ sends $[\bD]$
to $[\bD']$.  Let $F,F'\co (S_2^2,P_2)\to (S_2^2,P_2)$ be the
corresponding Thurston maps.  Then $F'=b_2\circ F$, where $b_2\co
(S_2^2,P_2)\to (S_2^2,P_2)$ is the map induced by $b$.
\end{thm}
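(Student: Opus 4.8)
The plan is to follow the proof of Theorem~\ref{thm:twisting1}, with the linear action replaced by the effect of moving the circled point. Passing from $\bD$ to $\bD'$ alters only the translation term: the ordered basis $(\zl_1,\zl_2)$, hence the lattice $\zL_1$, the sphere $S_1^2$, the forgetful covering $\overline{\mathrm{id}}\co S_1^2\to S_2^2$, and the green arcs $\zb_j$ are all unchanged. In particular the push homeomorphism is unchanged, $h'=h$, and the sole modification is in the affine map. Writing $a,a'\in\zL_1$ for the old and new circled points and setting $c:=a'-a\in\zL_1$, we have $\zF'=T_c\circ\zF$, where $T_c\co\bbR^2\to\bbR^2$ denotes translation by $c$. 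First I would record this in a commutative diagram analogous to Figures~\ref{fig:ids} and \ref{fig:FF'}, in which the role of $A_1$ is played by the half-period translation $c_1\co S_1^2\to S_1^2$ induced by $T_c$ and the role of $A_2$ is played by $b_2$.

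Descending $\zF'=T_c\circ\zF$ through $\zp_1$ yields $\zf'=c_1\circ\zf$, whence
\[
F'=\overline{\mathrm{id}}\circ h\circ\zf'=\overline{\mathrm{id}}\circ h\circ c_1\circ\zf.
\]
The objective is to move $c_1$ leftward, first past $h$ and then past $\overline{\mathrm{id}}$, so as to realize it as postcomposition by $b_2$ and conclude $F'=b_2\circ(\overline{\mathrm{id}}\circ h\circ\zf)=b_2\circ F$. For the push map I would commute $c_1$ past $h$ up to isotopy rel $P_2$: since $c\in\zL_1$, the translation $c_1$ permutes the corner set $P_1$, and writing $h\circ c_1=c_1\circ(c_1^{-1}hc_1)$ exhibits $c_1^{-1}hc_1$ as the push homeomorphism along the translated arcs $c_1^{-1}(\zb_j)$, which I expect to agree with the original push data up to isotopy rel endpoints once $c_1$ has been absorbed on the $S_2^2$ side. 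For the covering the relevant identity is rigorous and clean: because $\overline{\mathrm{id}}$ is induced by the identity $\bbR^2\to\bbR^2$, both $\overline{\mathrm{id}}\circ c_1$ and $c_2\circ\overline{\mathrm{id}}$ descend from $T_c$, so that
\[
\overline{\mathrm{id}}\circ c_1=c_2\circ\overline{\mathrm{id}},
\]
where $c_2$ is the translation of $S_2^2$ by the image of $c$ under the inclusion $\zL_1\hookrightarrow\zL_2$.

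The main obstacle is the identification $c_2=b_2$, equivalently the assertion that the class of $c$ in $\zL_2/2\zL_2\cong\bbZ_2^2$ equals $\overline{b}$. This is the entire content of the theorem and is exactly what the encoding of the translation term through the homomorphism $\zL_1\to\bbZ_2^2$ sending $(\zl_1,\zl_2)$ to the standard basis is meant to deliver: by construction of $\bD'$ one has $\overline{a'}-\overline{a}=\overline{b}$ in $\bbZ_2^2$, so $c=a'-a$ represents $\overline{b}$ in the $\zL_1$-frame, and the work is to reconcile this with the standard $\zL_2$-frame that defines $b_2$. It is this reconciliation—passing from the $\zL_1$-level translation furnished by the diagram to the $\zL_2$-level postcomposition demanded by $b_2$, rather than the (routine) diagram chase of the first paragraph—that upgrades the elementary relation to the asserted $F'=b_2\circ F$. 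Once $c_2=b_2$ is in hand, the commutative diagram closes exactly as in Theorem~\ref{thm:twisting1} and the proof is complete.
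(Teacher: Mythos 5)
Your setup is sound as far as it goes: with $c=a'-a$ you correctly get $\zF'=T_c\circ\zF$, hence $\zf'=c_1\circ\zf$ and $F'=\overline{\mathrm{id}}\circ h\circ c_1\circ\zf$, and the identity $\overline{\mathrm{id}}\circ c_1=c_2\circ\overline{\mathrm{id}}$ is exact. The fatal gap is the identification $c_2=b_2$, which you defer as ``reconciliation'' but which is false. The circled points are encoded through the homomorphism $\zL_1\to\bbZ_2^2$ taking $(\zl_1,\zl_2)$ to the standard basis, so $\overline{a'}=\overline{a}+\overline{b}$ says that, writing $b=(b_1,b_2)$ and $A=[\zl_1,\zl_2]$, one has $c\equiv b_1\zl_1+b_2\zl_2=Ab\pmod{2\zL_1}$. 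Hence the class of $c$ in $\zL_2/2\zL_2$ is $\overline{Ab}$, not $\overline{b}$, and $c_2=(Ab)_2$. These agree only when $(A-I)b\equiv 0 \bmod 2$; for instance with $\zl_1=(2,0)$, $\zl_2=(0,1)$, $b=e_1$, one gets $c\equiv\zl_1\equiv 0\pmod{2\zL_2}$, so $c_2=\mathrm{id}$ while $b_2\neq\mathrm{id}$. The obstruction is structural: $\overline{\mathrm{id}}$ is induced by the identity map of $\bbR^2$, so it preserves standard coordinates and can never effect the frame change from $(\zl_1,\zl_2)$-coordinates to standard coordinates; the only factor in $F=\overline{\mathrm{id}}\circ h\circ\zf$ that performs that conversion is $\zf$ itself. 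Your intermediate step of commuting $c_1$ past $h$ is also unjustified: $c_1^{-1}hc_1$ is the push along the arcs $c_1(\zb_j)$, whose terminal endpoints form $c_1(P)\neq P$ in general, and there is no reason these pushes agree with $h$ up to the relevant isotopy even after descending to $S_2^2$. Note, moreover, that if both of your steps were granted they would deliver $F'=(Ab)_2\circ F$, not $b_2\circ F$, so the outline cannot be completed as written.

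The paper's proof goes the opposite way around and touches neither $h$ nor $\overline{\mathrm{id}}$. Let $e\in\{0,e_1,e_2,e_1+e_2\}$ represent $\overline{b}$ and let $\zt(e)=Ae\in\{0,\zl_1,\zl_2,\zl_1+\zl_2\}$. Since $c\equiv\zt(e)\pmod{2\zL_1}$, one has $\zf'=\zt(e)_1\circ\zf$, and the conjugation identity $\zf\circ e_2\circ\zf^{-1}=\zt(e)_1$ (immediate from $\zF\circ T_e\circ\zF^{-1}=T_{Ae}$) rewrites this as $\zf'=\zf\circ e_2$ with $e_2=b_2$. Thus the half-translation is exchanged for a level-$2$ translation through $\zf$---the unique frame-changing factor---on the domain side of the factorization, before it ever meets the push map; $h$ and $\overline{\mathrm{id}}$ are common to $F$ and $F'$ and require no commutation at all. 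Your plan of transporting the level-$1$ translation outward through $h$ and $\overline{\mathrm{id}}$ is precisely the move the paper's one-line argument is designed to avoid, and it is where your proposal breaks.
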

 \begin{proof} Let $e_1=(1,0)$ and $e_2=(0,1)$, the standard basis
vectors in $\bbZ^2$.  We have a straightforward bijection $\zt\co
\{0,e_1,e_2,e_1+e_2\}\to \{0,\zl_1,\zl_2,\zl_1+\zl_2\}$.  The
homeomorphism $\zf$ conjugates the element in the modular group of
$(S_2^2,P_2)$ induced by $x\mapsto x+e$, where $e\in
\{0,e_1,e_2,e_1+e_2\}$, to the element of the modular group of
$(S_1^2,P_1)$ induced by $x\mapsto x+\zt(e)$.
Theorem~\ref{thm:twisting2} follows easily from this.

\end{proof}

\section{Presentation and equivalence of Euclidean NET maps }
\label{sec:euc}\nosubsections

\medskip\noindent \textbf{Completeness of construction.}  In this
paragraph, we show that the conditions in data items (1) and (2) of \S
\ref{sec:diagram2map} obtains every Thurston equivalence class of
Euclidean NET maps.  So, let $g\co S^2\to S^2$ be a Euclidean NET map.
As in Section 1 of \cite{cfpp}, we construct lattices $\zL_i$, groups
$\zG_i$ of Euclidean isometries and quotient maps $\zp_i\co
\mathbb{R}^2\to \mathbb{R}^2/\zG_i=S^2_i$ for $i\in \{1,2\}$.  There
exist identifications of $S^2_i$ with the dynamical plane $S^2$ of $g$
so that $\zp_i(\zL_i)$ is the postcritical set $P_g$ of $g$ for $i\in
\{1,2\}$ and $g\circ \zp_1=\zp_2$.  We have the right half of
Figure~\ref{fig:complete}.  Now we apply statement 2 of
Lemma~\ref{lemma:induceup} with $\zL=\zL_2$, $\zL'=\zL_1$ and $\zf\co
S^2\to S^2$ the identity map.  We obtain an affine isomorphism $\zF\co
\mathbb{R}^2\to \mathbb{R}^2$ such that $\zF(\zL_2)$ is a sublattice
of $\zL_1$ and the branched map which $\zF$ induces from $S^2$ to
$S^2$ is isotopic to the identity map rel $P_g$.  Because $\zF$
induces the identity map, $\zF(\zL_2)=\zL_1$.  Because $g$ has degree
greater than 1, the lattice $\zL_1$ is a proper sublattice of $\zL_2$.
This proves that the construction is complete.

\begin{figure}
  \begin{equation*}
\xymatrix{\mathbb{R}^2\ar[d]^{\zp_2}\ar[r]^\zF & 
\mathbb{R}^2\ar[d]^{\zp_1}\ar[r]^{\text{id}} &
\mathbb{R}^2\ar[d]^{\zp_2}\\
S^2\ar[r]^{id} & S^2\ar[r]^g & S^2\\}
  \end{equation*}
 \caption{Proving that the construction is complete}
\label{fig:complete}
\end{figure}

\medskip\noindent \textbf{Equivalence of Euclidean NET map data.}
Thus far we have a way to construct Euclidean NET maps from our
Euclidean NET map data, and we know that every Euclidean NET map has
this form.  Hence the Thurston equivalence relation on Euclidean NET
maps corresponds to an equivalence relation on sets of Euclidean NET
map data.  Here it is.

\smallskip\noindent\textbf{Equivalence relation on data.}  Suppose
that we have lattices $\zL_2\subseteq \mathbb{R}^2$ and
$\zL'_2\subseteq \mathbb{R}^2$ and orientation-preserving affine
isomorphisms $\zF\co \mathbb{R}^2\to \mathbb{R}^2$ and $\zF'\co
\mathbb{R}^2\to \mathbb{R}^2$ such that $\zF(\zL_2)$ is a proper
sublattice $\zL_1$ of $\zL_2$ and $\zF'(\zL'_2)$ is a proper
sublattice $\zL'_1$ of $\zL'_2$.  We say that the pair $(\zL_2,\zF)$
is equivalent to the pair $(\zL'_2,\zF')$ if and only if there exists
an orientation-preserving affine isomorphism $\zJ\co \mathbb{R}^2\to
\mathbb{R}^2$ such that
\begin{enumerate}
  \item $\zJ(\zL_2)=\zL'_2$;
  \item $\zJ\circ \zF\circ\zJ^{-1}=\pm\zF'+2\zl$ for some $\zl\in\zL_1'$.
\end{enumerate}

This defines an equivalence relation on data pairs.  Using
Lemmas~\ref{lemma:inducedown} and \ref{lemma:induceup}, it is a
straightforward exercise to prove that data equivalence implies map
equivalence and map equivalence implies data equivalence; see Figure \ref{fig:eqverln}.

\begin{figure}
  \begin{equation*}
\xymatrix{ & \mathbb{R}^2\ar'[d][dd]^>>>>>>
{\zp'_1}\ar[rr]^{\zF'} & & \mathbb{R}^2\ar[dd]^{\zp'_1}\\
\mathbb{R}^2\ar[dd]_{\zp_1}
\ar[rr]^>>>>>>>>>>{\qquad\zF}\ar[ur]^\zJ &
 & \mathbb{R}^2\ar[dd]_>>>>>>{\zp_1}\ar[ur]^\zJ & \\
 & S^2\ar'[r][rr]^<<<<{g'} & & S^2\\
S^2\ar[rr]^g\ar[ur]^\zj & & S^2\ar[ur]^\zj &}
  \end{equation*}
 \caption{Equivalence of Euclidean NET maps}
\label{fig:eqverln}
\end{figure}

\section{Preliminary presentations of general NET maps }
\label{sec:prelim}\nosubsections

In this section we extend the results of the previous section to
general NET maps by showing that each NET map $f$ is equivalent to a
map presented by what we call preliminary data. This data will factor
$f$ as a composition of a Euclidean NET map $g$ and a
push-homeomorphism $h$ where the arcs $\beta_j$ along which the pushes
occur are pairwise disjoint.

Here is our preliminary presentation for NET maps.

\medskip\noindent
\textbf{Preliminary NET map data.}
\begin{enumerate}
  \item A lattice $\zL_2\subseteq \mathbb{R}^2$.
  \item An orientation-preserving affine isomorphism
$\zF\co\mathbb{R}^2\to \mathbb{R}^2$ such that $\zF(\zL_2)$ is a
proper sublattice $\zL_1$ of $\zL_2$.
  \item Four closed arcs $\za_1$, $\za_2$, $\za_3$,
$\za_4$ in $\mathbb{R}^2$ such that $\za_i$ has initial endpoint in
$\zL_1$ and terminal endpoint in $\zL_2$ for every $i\in \{1,2,3,4\}$.
Furthermore, the images of $\za_1$, $\za_2$, $\za_3$, $\za_4$ in
$\mathbb{R}^2/\zG_1$ are disjoint, where $\zG_1$ is the group of
Euclidean isometries of the form $x\mapsto 2\zl\pm x$ for some $\zl\in
\zL_1$.
\end{enumerate}

In the present case of general NET maps, our preliminary data do not
always determine an equivalence class of NET maps, but only because
the Thurston maps which we obtain might have fewer than four
postcritical points.

\medskip\noindent\textbf{Construction.}  We use the preliminary NET 
map data to construct an equivalence class of Thurston maps.  These
maps will be NET maps if they have at least four postcritical points,
which is almost always the case.  The data in items (1) and (2)
constitute Euclidean NET map data.  Let $g\co S^2\to S^2$ be a
Euclidean NET map associated to the data in items (1) and (2).

Now we turn our attention to the construction of a homeomorphism $h\co
S^2\to S^2$.  The construction of $g$ involves an identification of
$\mathbb{R}^2/\zG_1$ with $S^2$.  So $\za_1$, $\za_2$, $\za_3$,
$\za_4$ map to disjoint closed arcs $\zb_1$, $\zb_2$, $\zb_3$, $\zb_4$
in $S^2$.  We take $h\co S^2\to S^2$ to be a push map along
$\zb_1$, $\zb_2$, $\zb_3$, $\zb_4$.

Now that $g$ and $h$ are defined, we set $f=h\circ g$.  Let $P_1$ be
the set of initial endpoints of $\zb_1$, $\zb_2$, $\zb_3$, $\zb_4$,
and let $P$ be the set of terminal endpoints of $\zb_1$, $\zb_2$,
$\zb_3$, $\zb_4$.  The definitions imply that $P_1$ is the
postcritical set of $g$ and that $g(P)\subseteq P_1$.  It follows that
$f$ is a Thurston map, and it is a NET map if it has at least four
postcritical points.  In this case, $P$ is the postcritical set of
$f$.  This completes the construction of a Thurston map $f$ from the
preliminary NET map data.

We next show that the equivalence class of the Thurston map $f$ just
constructed is independent of the choices involved.  There are two
choices.  The first involves the identification of
$\mathbb{R}^2/\zG_1$ with $S^2$.  Changing this identification
corresponds to conjugating $f$ by a homeomorphism.  The result is
equivalent to $f$.  The second choice is the choice of $h$.  But $h$
is unique up to isotopy rel $P_1$.  Hence because $g(P)\subseteq
P_1$, changing $h$ amounts to changing $f$ by an isotopy rel $P$.
This is also equivalent to $f$.

\medskip\noindent\textbf{Completeness of construction.}  We show that
this construction obtains every equivalence class of NET maps.  So,
let $f\co S^2\to S^2$ be a NET map.  We have the usual lattices
$\zL_1$ and $\zL_2$, groups $\zG_1$ and $\zG_2$, maps $\zp_1$ and
$\zp_2$ and four-element sets $P_1$ and $P_2$ coming from Section 1
of \cite{cfpp}.  Lemma 1.3 of \cite{cfpp} shows that there exist
exactly four points in $S^2$ which are not critical points of $f$ such
that $f$ maps them to its postcritical set.  We choose an
identification of $S^2$ with $S_1^2$ so that $P_1$ is this set of four
points.  We choose an identification of $S^2$ with $S_2^2$ so that
$P_2$ is the postcritical set of $f$.  We construct four disjoint
closed arcs $\zb_1$, $\zb_2$, $\zb_3$, $\zb_4$ in $S^2$ such that the
initial endpoint of $\zb_i$ is in $P_1$ and the terminal endpoint of
$\zb_i$ is in $P_2$ for every $i\in \{1,2,3,4\}$.  We choose four
closed arcs $\za_1$, $\za_2$, $\za_3$, $\za_4$ in $\mathbb{R}^2$ such
that $\zp_1$ maps $\za_i$ homeomorphically to $\zb_i$ for every $i\in
\{1,2,3,4\}$.  Then the initial endpoint of $\za_i$ is in $\zL_1$ and
the terminal endpoint of $\za_i$ is in $\zL_2$ for every $i\in
\{1,2,3,4\}$.  Hence we have the NET map data in item (3).  Let $h\co
S^2\to S^2$ be a push map relative to $\zb_1$, $\zb_2$, $\zb_3$,
$\zb_4$.  We know that then $f=h\circ g$, where $g$ is a Euclidean NET
map.  Section~\ref{sec:euc} shows that $g$ is Thurston equivalent to a
Euclidean NET map $g'$ which arises from NET map data as in items (1)
and (2).  This means that there exist homeomorphisms $\zj_1\co S^2\to
S^2$ and $\zj_2\co S^2\to S^2$ which are isotopic rel $P_1$ such that
$g'\circ \zj_2=\zj_1\circ g$.  Let $h'=\zj_2\circ h\circ \zj_1^{-1}$,
and let $f'=h'\circ g'$.  We have the commutative diagram in
Figure~\ref{fig:cnstrncmp}.  We see that $f$ and $f'$ are conjugate,
hence Thurston equivalent.  Because $\zj_1$ and $\zj_2$ are isotopic
rel $P_1=P_g$, the maps $\zj_1^{-1}$ and $\zj_2^{-1}$ are isotopic rel
$P_{g'}$.  So $h'$ is isotopic rel $P_{g'}$ to $h''=\zj_2\circ h\circ
\zj_2^{-1}$, which is clearly a push map of the desired form.  So $f$
is Thurston equivalent to a NET map $f''=h''\circ g'$ which arises
from NET map data.  This shows that our construction obtains every
equivalence class of NET maps.

\begin{figure}
  \begin{equation*}
\xymatrix{S^2\ar[d]^{\zj_2}\ar[r]^g &
S^2\ar[d]^{\zj_1}\ar[r]^h & S^2\ar[d]^{\zj_2}\\
S^2\ar[r]^{g'} & S^2\ar[r]^{h'} & S^2}
  \end{equation*}
 \caption{Proving completeness of the construction}
\label{fig:cnstrncmp}
\end{figure}

\medskip\noindent\textbf{Conjugation equivalence of NET map data.}
Just as for Euclidean NET map data, the Thurston equivalence relation
on NET maps corresponds to an equivalence relation on preliminary NET
map data.  We are about to define a conjugation equivalence relation
on preliminary NET map data analogous to the equivalence relation on
Euclidean NET map data.  Unlike the Euclidean case, this conjugation
equivalence relation is in general strictly stronger than the
equivalence relation on preliminary NET map data which corresponds to
the Thurston equivalence relation on NET maps.

\medskip\noindent\textbf{Conjugation equivalence relation on 
preliminary data.}  Suppose that we have two sets of NET map data.  So
we have lattices $\zL_2\subseteq \mathbb{R}^2$ and $\zL'_2\subseteq
\mathbb{R}^2$ and orientation-preserving affine isomorphisms $\zF\co
\mathbb{R}^2\to \mathbb{R}^2$ and $\zF'\co \mathbb{R}^2\to
\mathbb{R}^2$ such that $\zF(\zL_2)$ is a proper sublattice $\zL_1$ of
$\zL_2$ and $\zF'(\zL'_2)$ is a proper sublattice $\zL'_1$ of
$\zL'_2$.  We also have closed arcs $\za_1$, $\za_2$, $\za_3$, $\za_4$
in $\mathbb{R}^2$ such that $\za_i$ has initial endpoint in $\zL_1$
and terminal endpoint in $\zL_2$ and closed arcs $\za'_1$, $\za'_2$,
$\za'_3$, $\za'_4$ in $\mathbb{R}^2$ such that $\za'_i$ has initial
endpoint in $\zL'_1$ and terminal endpoint in $\zL'_2$.  Furthermore,
the images $\zb_1$, $\zb_2$, $\zb_3$, $\zb_4$ of $\za_1$, $\za_2$,
$\za_3$, $\za_4$ in $\mathbb{R}^2/\zG_1$ are disjoint, and the images
$\zb'_1$, $\zb'_2$, $\zb'_3$, $\zb'_4$ of $\za'_1$, $\za'_2$,
$\za'_3$, $\za'_4$ in $\mathbb{R}^2/\zG'_1$ are disjoint.  We say that
the triple $(\zL_2,\zF,\{\za_1,\za_2,\za_3,\za_4\})$ is conjugation
equivalent to the triple
$(\zL'_2,\zF',\{\za'_1,\za'_2,\za'_3,\za'_4\})$ if and only if there
exists an orientation-preserving affine isomorphism $\zJ\co
\mathbb{R}^2\to \mathbb{R}^2$ such that
\begin{enumerate}
  \item $\zJ(\zL_2)=\zL'_2$;
  \item $\zJ\circ \zF\circ \zJ^{-1}=\pm \zF'+2\zl$ for some $\zl\in\zL'_1$;
  \item There is an isotopy of $\mathbb{R}^2/\zG'_1$  rel
        $P'_1\cup P'_2$ taking $\zj\circ\zb_1$, $\zj\circ \zb_2$,
        $\zj\circ \zb_3$, $\zj\circ \zb_4$ to $\zb'_1$, $\zb'_2$,
    $\zb'_3$, $\zb'_4$, not necessarily in order, where
        $\zj\co \mathbb{R}^2/\zG_1\to \mathbb{R}^2/\zG'_1$ is the map
    of Lemma~\ref{lemma:inducedown} induced by $\zJ$ .
\end{enumerate}

This defines an equivalence relation on preliminary data triples.  One
can show as in Section~\ref{sec:euc} that if two sets of preliminary
data are conjugation equivalent, then the corresponding NET maps are
equivalent.

\section{The effect of twisting on preliminary data}
\label{sec:relate}\nosubsections

Suppose $f$ is a NET map, and suppose that we have expressed $f$ in
terms of preliminary data as in the previous subsection, so that $f$
factors as $f=h\circ g$. The arcs $\beta_j$ along which the
push-homeomorphism $h$ occurs may be very wild.  In
Section~\ref{sec:prens} we will show that we can absorb their
complexity into the affine part $g$. To do this we now examine the
effects of twisting on preliminary data.  This result is closely
related to Theorems~\ref{thm:twisting1} and \ref{thm:twisting2}.

\begin{thm}\label{thm:leftaction} Let $f$ be a NET map.  We construct
lattices and maps $\zL_1$, $\zL_2$, $\zp_1$, $\zp_2$ as in the last
section's proof of the completeness of the construction.  Relative to
these maps $f$ decomposes as $f=h\circ g$, where $g$ is a Euclidean
NET map and $h$ is a homeomorphism.  Suppose that $g$ lifts via
$\zp_1$ to an affine isomorphism $\zF\co \mathbb{R}^2\to \mathbb{R}^2$
such that $\zF(\zL_2)=\zL_1$.  Let $\zj\co (S^2,P)\to (S^2,P)$ be an
orientation-preserving branched covering map which lifts via $\zp_2$
to an affine isomorphism $\zJ\co \mathbb{R}^2\to \mathbb{R}^2$ such
that $\zJ(\zL_2)\subseteq \zL_2$.  Then $\zj\circ f=h\circ g'$, where
$g'$ is the Euclidean NET map which lifts via $\zp_1$ to $\zF\circ
\zJ$.
\end{thm}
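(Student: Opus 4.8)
The plan is to reduce the statement to a diagram chase among affine lifts, in close parallel with the proof of Theorem~\ref{thm:twisting1}. First I would assemble the lifting data into a single commutative ladder. By hypothesis $g$ lifts through $\zp_1$ to $\zF$ with $\zF(\zL_2)=\zL_1$, and $\zj$ lifts through $\zp_2$ to $\zJ$ with $\zJ(\zL_2)\subseteq\zL_2$. Since $\zL_1\subseteq\zL_2$, the identity of $\mathbb{R}^2$ induces the branched covering $\overline{\mathrm{id}}$ with $\zp_2=\overline{\mathrm{id}}\circ\zp_1$, and Lemma~\ref{lemma:inducedown} produces the homeomorphism $\zf$ induced by $\zF$, characterized by $\zf\circ\zp_2=\zp_1\circ\zF$. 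Combining these gives the factorization $g=\zf\circ\overline{\mathrm{id}}$ of the Euclidean part of $f$.

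Next I would identify $g'$ explicitly. Because $\zJ(\zL_2)\subseteq\zL_2$ and $\zF(\zL_2)=\zL_1$, we have $(\zF\circ\zJ)(\zL_2)\subseteq\zL_1$, so $\zF\circ\zJ$ is legitimate Euclidean NET data and Lemma~\ref{lemma:inducedown} applies to it. Using functoriality of the induced-map construction, namely that a composite of affine isomorphisms induces the composite of the induced branched covers, the map induced by $\zF\circ\zJ$ is $\zf\circ\zj$; feeding this through $\zp_2=\overline{\mathrm{id}}\circ\zp_1$ yields $g'=\zf\circ\zj\circ\overline{\mathrm{id}}$. Thus the twist has merely inserted $\zj$ between $\overline{\mathrm{id}}$ and $\zf$, while leaving the arcs $\zb_j$, and hence the push-homeomorphism $h$, untouched.

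With these identifications the desired identity becomes transparent: $\zj\circ f=\zj\circ h\circ\zf\circ\overline{\mathrm{id}}$, whereas $h\circ g'=h\circ\zf\circ\zj\circ\overline{\mathrm{id}}$. Since $\overline{\mathrm{id}}$ is surjective, the equation $\zj\circ f=h\circ g'$ is equivalent to the assertion that $\zj$, as a self-map of $S^2$, commutes up to isotopy rel $P$ with the homeomorphism $h\circ\zf$. I would establish this by the same device used in Theorem~\ref{thm:twisting1}: the passage $\bD\mapsto\bD'$ changes only the affine datum $\zF\mapsto\zF\zJ$ and fixes the green arcs, so the push-homeomorphism can be taken to be the \emph{same} $h$ on both sides, and the uniqueness clause of Lemma~\ref{lemma:induceup}(3) pins the two orderings down to a common isotopy class rel $P$.

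The main obstacle is exactly this last commutation step. Unlike the affine factors, the push-homeomorphism $h$ is not affine, so $\zj$ and $h$ need not commute on the nose, and the argument must be carried out at the level of isotopy classes rel $P$. The delicacy is that $\zj$ is naturally adapted to the $\zp_2$-structure, where the postcritical set $P=P_2$ lives, while $h$ and $\zf$ are adapted to the $\zp_1$-structure; the two lattice quotients must therefore be handled simultaneously, and it is the explicit relation $\zp_2=\overline{\mathrm{id}}\circ\zp_1$ together with the compatible choice of representative for the push-map that should do the real work, just as equation~\eqref{eqn:h} did in the proof of Theorem~\ref{thm:twisting1}.
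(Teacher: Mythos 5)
Your setup is fine and your formula $g'=\zf\circ\zj\circ\overline{\mathrm{id}}$ is correct, but the proof is not closed: you reduce the theorem to the claim that $\zj$ commutes (up to isotopy rel $P$) with $h\circ\zf$, flag this as ``the main obstacle,'' and then only gesture at a resolution. The gesture would not work. The device of equation~(\ref{eqn:h}) in the proof of Theorem~\ref{thm:twisting1} rests on the fact that $A_1$ carries the green arcs of $\mathbf{D}$ to those of $\mathbf{D}'$, so the two push maps can be chosen conjugate; here $\zj$ is an arbitrary orientation-preserving branched cover (possibly of degree $>1$) lifting to an affine map, and it has no reason to permute the arcs $\zb_j$, so there is no analogous compatible choice of push-map representative. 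Moreover the theorem asserts an on-the-nose equality $\zj\circ f=h\circ g'$, not merely one up to isotopy, so an isotopy-level commutation would not even suffice as stated.

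What you are missing is that in the setting of this theorem (the completeness construction of Section~\ref{sec:prelim}) one has $\zp_2=f\circ\zp_1$, so your $\overline{\mathrm{id}}$ \emph{is} $f$ and consequently $h\circ\zf=\mathrm{id}$: from $\zf\circ\zp_2=\zp_1\circ\zF=g\circ\zp_1$ and $\zp_2=h\circ g\circ\zp_1$ one gets $\zf\circ h\circ g\circ\zp_1=g\circ\zp_1$, hence $\zf=h^{-1}$. Your ``main obstacle'' is therefore vacuous, and your argument closes in one line --- but you did not observe this, and instead proposed an inapplicable method. The paper avoids the issue entirely by a direct chase: $\zj\circ f\circ\zp_1=\zj\circ\zp_2=\zp_2\circ\zJ=f\circ\zp_1\circ\zJ=h\circ\zp_1\circ\zF\circ\zJ=h\circ g'\circ\zp_1$, where the only substantive point is that $(\zF\circ\zJ)(\zL_1)\subseteq\zL_1$ so that Lemma~\ref{lemma:inducedown} produces $g'$ via $\zp_1$ (note $\zJ(\zL_1)\subseteq\zL_1$ may fail, which is why the left square cannot be subdivided); surjectivity of $\zp_1$ then gives the exact identity with no commutation of $\zj$ past the push map ever required.
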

  \begin{proof} As is usual for NET maps, we have that $\zp_2 =f\circ
\zp_1$.  So since $\zj$ lifts to $\zJ$ via $\zp_2$, we have the left
half of the commutative diagram in Figure~\ref{fig:leftaction}.  The
rest of Figure~\ref{fig:leftaction} results from the facts that
$f=h\circ g$ and that $g$ lifts to $\zF$ via $\zp_1$.  It need not be
true that $\zJ(\zL_1)=\zL_1$, so there need not be an induced
homeomorphism from $S^2$ to $S^2$ subdividing the left half of
Figure~\ref{fig:leftaction} into two squares.  However, $(\zF\circ
\zJ)(\zL_1)\subseteq (\zF\circ \zJ)(\zL_2)\subseteq \zF(\zL_2)=\zL_1$,
and so Lemma~\ref{lemma:inducedown} with $\zL=\zL'=\zL_1$ implies that
$\zF\circ \zJ$ induces a branched covering map $g'\co S^2\to S^2$ via
$\zp_1$.  This and the surjectivity of $\zp_1$ imply that $\zj\circ
f=h\circ g'$.

\begin{figure}
  \begin{equation*}
\xymatrix{\mathbb{R}^2\ar[d]^{\zp_1}\ar[r]^\zJ &
\mathbb{R}^2\ar[d]^{\zp_1}\ar[r]^\zF &
\mathbb{R}^2\ar[d]^{\zp_1}\\
S^2\ar[d]^f & S^2\ar[d]^f\ar[r]^g & S^2\ar[d]^h\\
S^2\ar[r]^\zj & S^2\ar[r]^{id} & S^2}
  \end{equation*}
 \caption{ Proving Theorem~\ref{thm:leftaction}}
\label{fig:leftaction}
\end{figure}

This proves Theorem~\ref{thm:leftaction}.

\end{proof}

\noindent\textbf{Remark.} One might find it unsettling that the left
action of $\zj$ on $f$ in Theorem~\ref{thm:leftaction} seems to
correspond to a right action of $\zJ$ on $\zF$.  However, keep in mind
that $\zF$ lifts $g$ via $\zp_1$ and that $\zJ$ lifts $\zj$ via
$\zp_2$.  While $\zp_1$ does not change in passing from $f$ to
$\zj\circ f$, the map $\zp_2$ does change.  Hence there is no right
action here.  \bigskip

\section{NET map presentations }\label{sec:prens}\nosubsections

In this section we prove Theorem \ref{thm:pren}.

Recall that in Section~\ref{sec:diagram2map} we defined a NET map
presentation to consist of three things.  The following optional
condition can also be imposed if desired.

\begin{enumerate}\setcounter{enumi}{3}
  \item There exist integers $m\ge 2$ and $n\ge 1$ such
that i) $n$ divides $m$, ii) $m$ divides $\zl_1$, iii) $n$ divides
$\zl_2$ and iv) $\text{det}(A)=mn$.
\end{enumerate}

The integers $m$ and $n$ are the \textbf{elementary divisors} of the
integer matrix $A:=[\lambda_1, \lambda_2]$.  Corollary 5.2 of
\cite{fpp2} shows that they are uniquely determined not only by the
Thurston equivalence class of a NET map, but even by its modular group
Hurwitz class.  We use the convention $n|m$ instead of the more common
condition $m|n$ so that pictures of associated fundamental domains
tend to be wider than they are tall and so are more convenient to
draw.

Having now an ordered basis $(\lambda_1, \lambda_2)$, we define an
associated fundamental domain for $\zG_1$ as the parallelogram with
corners 0, $2 \zl_1$, $\zl_2$ and $2 \zl_1+\zl_2$.

We are now prepared to prove Theorem~\ref{thm:pren}.

\begin{proof}[Proof of Theorem~\ref{thm:pren}] As a slight
strengthening of Theorem~\ref{thm:pren}, we prove that data item (4) can
be satisfied in addition to (1), (2) and (3).  The strategy of the proof is
to define first approximations of the objects in items (1) through (4).
These first approximations will be denoted by letters with prime
superscripts.  The final desired objects will eventually be gotten
from these by applying a linear transformation.

Let $f$ be a NET map.  Section~\ref{sec:prelim} shows that $f$ is
Thurston equivalent to a NET map which arises from the preliminary NET
map data presented there.  It is always possible to take the lattice
$\zL_2$ in preliminary NET map data item (1) to be $\mathbb{Z}^2$.
After making this normalization, let $\zF(x)=A'x+b'$ be the affine
isomorphism in preliminary NET map data item (2).  Since $\text{det}(A')
= \deg(f)\ge 2$, there exist integers $m\ge 2$ and $n\ge 1$ with $n|m$
and elements $P,Q\in \text{SL}(2,\mathbb{Z})$ such that
$A'=P\left[\begin{smallmatrix}m & 0 \\ 0 &
n\end{smallmatrix}\right]Q$.  Now we use the fact from
Section~\ref{sec:prelim} that the Thurston equivalence class of $f$ is
invariant under conjugation to replace $A'$ by $QA'Q^{-1}$.  As a
result, we may assume that $Q=1$.  So the first column $\zl'_1$ of
$A'$ is divisible by $m$, and the second column $\zl'_2$ of $A'$ is
divisible by $n$.  We have the data in NET map data item (4).  Let
$\zL'_1$ be the sublattice of $\zL_2$ generated by $\zl'_1$ and
$\zl'_2$.  The lattice $\zL'_1$ will lead to the desired lattice
$\zL_1$, and the vectors $\zl'_1$ and $\zl'_2$ will lead to the
vectors $\zl_1$ and $\zl_2$ in NET map data item (3).

We next define four line segments $\za'_1$, $\za'_2$, $\za'_3$,
$\za'_4$ which will lead to the line segments in NET
map data item (3).  Let $F'_1$ be the parallelogram in $\mathbb{R}^2$ with
corners 0, $2\zl'_1$, $\zl'_2$ and $2\zl'_1+\zl'_2$.  Then $F'_1$ is a
fundamental domain for the usual group $\zG'_1$.  The interior of $F'_1$
maps homeomorphically into $\mathbb{R}^2/\zG'_1$.  We refer to the
line segment with endpoints 0 and $2\zl'_1$ as the bottom of $F'_1$.  We
refer to the line segment with endpoints $\zl'_2$ and $2\zl'_1+\zl'_2$
as the top of $F'_1$.  Similarly, the line segment with endpoints 0 and
$\zl'_2$, respectively $2\zl'_1$ and $2\zl'_1+\zl'_2$, is the left,
respectively right, side of $F'_1$.  The rotation $x\mapsto 2\zl'_1-x$
identifies the two halves of the bottom of $F'_1$, and the rotation
$x\mapsto 2\zl'_1+2\zl'_2-x$ identifies the two halves of the top of
$F'_1$.  The translation $x\mapsto 2\zl'_1+x$ identifies the two sides
of $F'_1$.  So the union of the interior of $F'_1$ and the ``left half''
of the boundary of $F'_1$ maps bijectively to $\mathbb{R}^2/\zG'_1$.
Let $P_2$ be the postcritical set of $f$, and let $\widetilde{P}_2$ be
the inverse image of $P_2$ in this union.  So $\widetilde{P}_2$
consists of four points which are either in the interior of $F'_1$ or
in the left half of its boundary.

We begin to choose four line segments $\za'_1$, $\za'_2$, $\za'_3$,
$\za'_4$.  If $\widetilde{P}_2$ contains an element in the bottom of
$F'_1$, then we choose the element of $\widetilde{P}_2$ in the bottom of
$F'_1$ which is nearest 0, and we take one of our line segments to be
the line segment joining 0 and this point.  If $\widetilde{P}_2$
contains another element in the bottom of $F'_1$, then we choose the one
nearest $\zl'_1$, and we take one of our line segments to be the line
segment joining $\zl'_1$ and this point.  If $\widetilde{P}_2$
contains an element in the top of $F'_1$, then we choose line segments
analogously using $\zl'_2$ and $\zl'_1+\zl'_2$ instead of 0 and
$\zl'_1$.

Suppose that 0 is not yet in a line segment.  Then we choose an
element of $\widetilde{P}_2$ not yet chosen with minimal
$\zl'_2$-coordinate relative to the basis of $\mathbb{R}^2$ consisting
of $\zl'_1$ and $\zl'_2$.  We take the line segment joining this point
and 0.  Similarly, if $\zl'_1$ is not yet in a line segment, then we
choose an element of $\widetilde{P}_2$ not yet chosen with minimal
$\zl'_2$-coordinate, and we take the line segment joining this point
and $\zl'_1$.  The two line segments chosen thus far in this paragraph
might meet.  If so, then these two line segments are the diagonals of
a (possibly degenerate) quadrilateral, and we simply exchange them for
two opposite sides of that quadrilateral.  We choose line segments for
$\zl'_2$ and $\zl'_1+\zl'_2$ in the same way as for 0 and $\zl'_1$,
using maximal $\zl'_2$-coordinates instead of minimal
$\zl'_2$-coordinates.

We identify $\mathbb{R}^2/\zG'_1$ and $\mathbb{R}^2/\zG_2$ with $S^2$
as usual.  Although the instructions for choosing $\za'_1$, $\za'_2$,
$\za'_3$, $\za'_4$ are somewhat verbose, it is easy to see that the
images $\zb'_1$, $\zb'_2$, $\zb'_3$, $\zb'_4$ in $S^2$ of $\za'_1$,
$\za'_2$, $\za'_3$, $\za'_4$ are disjoint arcs.  The arcs $\zb'_i$ are
like those in NET map data item (3). Let $h'\co S^2\to S^2$ be a push
map relative to $\zb'_1$, $\zb'_2$, $\zb'_3$, $\zb'_4$ as in
Section~\ref{sec:prelim}.

We have that $f=h\circ g$, where $h$ is a push map and $g$ is the
Euclidean NET map determined by $\zF$ and the usual branched covering
map $\zp'_1\co \mathbb{R}^2\to S^2$ corresponding to $\zG'_1$.
So $f=\zj\circ (h'\circ g)$, where $\zj=h\circ h'^{-1}$ is an
orientation-preserving homeomorphism which stabilizes the postcritical
set $P_2$ of $f$.  Statement 2 of Lemma~\ref{lemma:induceup} with
$\zL=\zL'=\zL_2 =\mathbb{Z}^2$ easily implies that there exists an
affine isomorphism $\zJ\co \mathbb{R}^2\to \mathbb{R}^2$ such that
$\zJ(\zL_2)=\zL_2$ and $\zJ$ induces by means of the usual branched
covering map $\zp_2$ a homeomorphism from $S^2$ to $S^2$ which
is isotopic to $\zj$ rel $P_2$.  After modifying $f$ by an isotopy, we
may assume that $\zJ$ induces $\zj$.  Suppose that $\zJ(x)=Cx+d$.

Now we apply Theorems~\ref{thm:twisting1} and \ref{thm:twisting2}.  We
conclude that $f$ is Thurston equivalent to a NET map arising from NET
map data with the following form.  The lattice $\zL_2$ is preserved.
The orientation-preserving affine isomorphism has linear term $A=CA'$.
The associated green line segments have the form $\za_i=C\cdot
\za'_i$.  We take $\zl_1$ and $\zl_2$ to be the columns of $A$.
Because $\zl'_1$ is divisible by $m$ and $\zl'_2$ is divisible by $n$,
the corresponding statements are true for $\zl_1$ and $\zl_2$.

It easily follows that we now have a NET map presentation for our
original NET map and even that data item (4) is satisfied.  We obtain
our diagram from this as discussed in Section~\ref{sec:diagram2map}.

This proves Theorem~\ref{thm:pren}. 

\end{proof}

\section{An algorithm for computing NET map presentations}
\label{sec:algo}\nosubsections

The definition of a NET map is simple---it is easy to check whether or
not a Thurston map is a NET map.  The presentation of a NET map is
also easy to understand.  But finding a presentation of a given NET
map can be challenging.  The goal of this section is to provide an
algorithm which produces a presentation of a given NET map based on
topological knowledge of its action on the 2-sphere.

We begin with a discussion of Euclidean NET maps.  Let $g$ be a
Euclidean NET map.  Suppose that $g$ has a NET map presentation with
matrix $A$.  The translation term $b$ is irrelevant for this
discussion, and, of course, its green line segments are all trivial.

We wish to determine $A$ in terms of the action of $g$ on the 2-sphere
$S^2$.  Let $\zl_1$ and $\zl_2$ be the columns of $A$, as usual.  Let
$e_1$ and $e_2$ be the standard basis vectors of $\mathbb{Z}^2$.
Since $A$ expresses $\zl_1$ and $\zl_2$ in terms of $e_1$ and $e_2$,
the matrix $A^{-1}$ expresses $e_1$ and $e_2$ in terms of $\zl_1$ and
$\zl_2$.

Our next goal is to interpret the last statement in terms of simple
closed curves in $S^2$.  We have a branched covering map $\zp_1\co
\mathbb{R}^2\to S^2$ as usual taking 0, $\zl_1$, $\zl_2$,
$\zl_1+\zl_2$ onto the postcritical set $P_g$ of $g$.  Now we compute
slopes of simple closed curves in $S^2\setminus P_g$ using the ordered
basis $(\zl_1,\zl_2)$ rather than $(e_1,e_2)$.  Hence a simple closed
curve in $S^2\setminus P_g$ with core arc being the image under
$\zp_1$ of the line segment joining 0 and $\zl_1$ has slope 0.
Similarly, a simple closed curve in $S^2\setminus P_g$ with core arc
being the image under $\zp_1$ of the line segment joining 0 and
$\zl_2$ has slope $\infty$.

Let $\zg\subseteq S^2\setminus P_g$ be a simple closed curve with
slope 0.  The slopes of the connected components of $g^{-1}(\zg)$ are
equal.  Suppose that this slope is $\frac{p}{q}$, where $p$ and $q$
are relatively prime integers.  This implies that the vector $q
\zl_1+p \zl_2$ has the same direction as $\pm e_1$.  See
Figure~\ref{fig:computea}, where $p=-1$ and $q=2$.  Because $q \zl_1+p
\zl_2$ has integer coordinates, there is a positive integer $d$ such
that $\pm de_1=q \zl_1+p \zl_2$.  As in Theorem 4.1 of \cite{cfpp},
the integer $d$ is the degree with which $g$ maps every connected
component of $g^{-1}(\zg)$ to $\zg$.  Similarly, if $\zd$ is a simple
closed curve in $S^2\setminus P_g$ with slope $\infty$, then $\pm
ee_2=s \zl_1+r \zl_2$, where every connected component of
$g^{-1}(\zd)$ has slope $\frac{r}{s}$ and $g$ maps each of these
connected components to $\zd$ with degree $e$.  If necessary, we multiply one of the
columns of the matrix $\left[\begin{smallmatrix}q/d & s/e \\ p/d & r/e
\end{smallmatrix}\right]$ by $-1$ so that the resulting determinant is
positive.  Because the determinant of $A$ is positive, the resulting
matrix is $\pm A^{-1}$.  Thus in this way we are able to compute $A$
up to multiplication by $\pm 1$, which is what we need.

  \begin{figure}
\centerline{\includegraphics{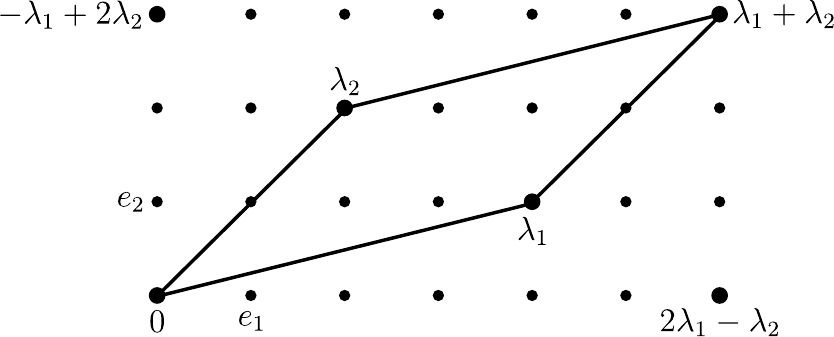}}  \caption{Computing a
matrix of a Euclidean NET map }
\label{fig:computea}
  \end{figure}
 
For example, for Figure~\ref{fig:computea}, we have that $p=-1$,
$q=2$ and $d=6$.  Similarly, $r=2$, $s=-1$ and $e=3$.  Checking
positivity of the determinant, we find that we may take
  \begin{equation*}
A=\begin{bmatrix}\frac{q}{d} & \frac{s}{e} \\ 
\frac{p}{d} & \frac{r}{e}\end{bmatrix}^{-1}=
\begin{bmatrix}\frac{1}{3} & \frac{-1}{3} \\ 
\frac{-1}{6} & \frac{2}{3} \end{bmatrix}^{-1}=
6\begin{bmatrix}\frac{2}{3} & \frac{1}{3} \\
\frac{1}{6} & \frac{1}{3} \end{bmatrix}=
\begin{bmatrix}4 & 2 \\ 1 & 2 \end{bmatrix},
  \end{equation*}
in agreement with the facts that $\zl_1=(4,1)$ and $\zl_2=(2,2)$.

Before describing the algorithm, we make a remark regarding notation.
The above proof of Theorem~\ref{thm:pren} begins by defining objects
denoted by primed characters.  In general, these objects must be
modified by a linear transformation to obtain the desired objects
denoted by unprimed characters.  In practice this linear
transformation is unnecessary.  So in our description of the
algorithm, we reverse the notation; we begin with unprimed characters
and end with primed characters.

We are now ready to present an algorithm which finds a presentation
for a general NET map $f$.  Here it is.
\medskip

\noindent
\textbf{Step 1. Identify the postcritical set $P_2$ of $f$.}
\medskip

\noindent
\textbf{Step 2. Identify the set $P_1$ of four points in $f^{-1}(P_2)$
which are not critical points.}
\medskip

Lemma 1.3 of \cite{cfpp} shows that $P_1$ exists.
\medskip

\noindent
\textbf{Step 3. Construct four disjoint (green) arcs $\zb_1$, $\zb_2$,
$\zb_3$, $\zb_4$ in $S^2$ each with one endpoint in $P_1$ and one
endpoint in $P_2$.}
\medskip

These arcs determine a push map $h$ up to isotopy rel $P_1$ such that
$h(P_1)=P_2$.  Set $g=h^{-1}\circ f$.  Then, as in Theorem 2.1 of
\cite{cfpp}, $g$ is a Euclidean NET map with postcritical set $P_1$.
\medskip

\noindent
\textbf{Step 4. Construct a simple closed curve in $S^2$ containing
$P_1$ which meets every $\zb_i$ in at most its endpoints.  Label the
points of $P_1$ with labels 0, $\zl_1$, $\zl_1+\zl_2$, $\zl_2$ in
cyclic order around the curve. This curve together with this labeling
of these four points determines a topological quadrilateral, $Q_1$, so
that the orientation of the labeled points is counterclockwise
relative to $Q_1$.}
\medskip

  \begin{figure} 
\centerline{\includegraphics{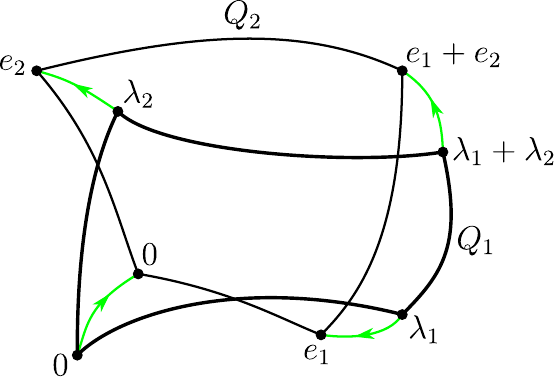}}\caption{The quadrilaterals 
$Q_1$ and $Q_2$}
\label{fig:q1q2}
  \end{figure}

See Figure~\ref{fig:q1q2}, where $\zb_1$, $\zb_2$, $\zb_3$, $\zb_4$
are drawn as green arcs.  Arrows on them indicate push directions.
One of the cyclic labelings of $Q_1$ is shown.  (The elements of $P_1$
will be the images of lattice points 0, $\zl_1$, $\zl_1+\zl_2$ and
$\zl_2$ in $\mathbb{R}^2$ under the usual branched covering map
$\zp_1\co \mathbb{R}^2\to S^2$.  We view $Q_1$ as the image of
the parallelogram in Figure~\ref{fig:computea} under $\zp_1$.)
\medskip

\noindent
\textbf{Step 5. (Optional) Construct $g^{-1}(\partial Q_1)$ up to
isotopy rel $P_1$.}
\medskip

Although this step is optional, it is useful when performing Step 6
and when constructing the covering map $\zp_1$ between Steps 8
and 9.  We may view $g^{-1}(\partial Q_1)$ as a 1-complex in $S^2$
with vertex set $g^{-1}(P_1)$.  The elements of $P_1$ are vertices of
$g^{-1}(\partial Q_1)$ with valence 2.  All other vertices have
valence 4.

Now we use $Q_1$ to define slopes of simple closed curves in
$S^2\setminus P_1$.  The edge of $Q_1$ with labels 0 and $\zl_1$
corresponds to slope 0, and the edge of $Q_1$ with labels 0 and
$\zl_2$ corresponds to slope $\infty$. (We are computing slopes here
using the ordered basis $(\zl_1,\zl_2)$ of $\mathbb{R}^2$, not the
standard ordered basis $(e_1,e_2)$.)  Let $\zg$ and $\zd$ be simple
closed curves in $S^2\setminus P_1$ with slopes 0 and $\infty$,
respectively.
\medskip

\noindent
\textbf{Step 6. Compute the slope $\frac{p}{q}$ in reduced form of one
component of $g^{-1}(\zg)$, and compute the degree $d$ with which $g$
maps this component to $\zg$.  Compute the slope $\frac{r}{s}$ in
reduced form of one component of $g^{-1}(\zd)$, and compute the degree
$e$ with which $g$ maps this component to $\zd$.}
\medskip

The 1-complex $g^{-1}(\partial Q_1)$ should be helpful here. 
\medskip

\noindent
\textbf{Step 7. Multiply one column of the matrix
$\left[\begin{smallmatrix}q/d & s/e \\ p/d & r/e
\end{smallmatrix}\right]$ by $-1$ if necessary so that the result has
positive determinant.  Compute the inverse $A$ of this matrix.}
\medskip

The discussion at the beginning of this section shows that the matrix
$A$ is a matrix of integers which is a presentation matrix for $g$.
\medskip

\noindent
\textbf{Step 8. Let $x$ be the element of $P_1$ with label 0, and determine
the label $b$ of $g(x)$.}
\medskip

Let $\zl_1$ and $\zl_2$ be the columns of $A$.  We have an affine
automorphism $\zF(x)=Ax+b$ of $\mathbb{R}^2$.  Now we define our usual
branched covering map $\zp_1\co \mathbb{R}^2\to S^2$ so that
$\zp_1(\zl)$ is the element of $P_1$ with label $\zl$ for
$\zl\in \{0,\zl_1,\zl_2,\zl_1+\zl_2\}$ and $\zp_1$ takes the
1-skeleton of the standard tiling of $\mathbb{R}^2$ by squares to
$g^{-1}(\partial Q_1)$, at least up to isotopy rel $\mathbb{Z}^2$.
The lifts of $Q_1$ to $\mathbb{R}^2$ yield fundamental domains for
$\zL_1$ and the lifts of $g^{-1}(Q_1)$ yield fundamental domains for
$\zL_2$.  The parallelogram $F_1$ in $\mathbb{R}^2$ with corners 0,
$2\zl_1$, $\zl_2$ and $2\zl_1+\zl_2$ is a fundamental domain for the
usual group $\zG_1$.
\medskip

\noindent 
\textbf{Step 9. Construct line segments $\za_i$ in $F_1$ such that the
arcs $\zp_1(\za_i)$ form a set of four disjoint arcs, each with
one endpoint in $P_1$ and one endpoint in $P_2$.}
\medskip

If a homotopy of $S^2$ rel $P_1\cup P_2$ takes the union of the arcs
$\zp_1(\za_i)$ to the union of the arcs $\zb_i$, then we are
done: we have an affine isomorphism $\zF(x)=Ax+b$, a parallelogram
which is a fundamental domain $F_1$ for $\zG_1$ and appropriate green
line segments $\za_i$.

Otherwise, we continue.  What follows is essentially an algorithm for
computing a NET map presentation of a twist of a NET map with known
presentation.  Let $k\co S^2\to S^2$ be a push map relative to the
images of the $\za_i$'s in $S^2$ such that $k(P_1)=P_2$.  Then
$f=h\circ g=\zj\circ (k\circ g)$, where $\zj=h\circ k^{-1}$ is an
orientation-preserving homeomorphism which stabilizes $P_2$.  We have
a presentation of the desired form for the NET map $k\circ g$.  We
wish to transform this presentation to a presentation for $f$, a
modular group twist of $k\circ g$.  This leads us to seek an affine
isomorphism $\zJ\co \mathbb{R}^2\to \mathbb{R}^2$ which induces $\zj$
relative to the branched covering map $\zp_2=f\circ \zp_1$.  For this
we work with simple closed curves in $S^2\setminus P_2$, calculating
their slopes using a quadrilateral $Q_2$ isotopic to $h(Q_1)$ rel
$P_2$ just as we used $Q_1$ to calculate slopes of simple closed
curves in $S^2\setminus P_1$.  See Figure~\ref{fig:q1q2}.  Now we take
$\zg$ and $\zd$ to be simple closed curves in $S^2\setminus P_2$ with
slopes 0 and $\infty$, respectively.
\medskip

\noindent
\textbf{Step 10. Compute the slopes $\frac{p}{q}$ and $\frac{r}{s}$ in
reduced form of $\zj(\zg)$ and $\zj(\zd)$}.
\medskip

\noindent
\textbf{Step 11. Multiply one column of the matrix
$\left[\begin{smallmatrix}q & s \\ p & r \end{smallmatrix}\right]$ by
$-1$ if necessary so that the result has determinant 1.  Let $C$ be
the resulting matrix.}
\medskip

\noindent
\textbf{Step 12. Label the vertices of $Q_2$ with labels 0, $e_1$,
$e_1+e_2$, $e_2$ consistently with the labels of $Q_1$.  Let $x$ be
the element of $P_2$ with label 0, and let $d$ be the label of
$\zj(x)$.}
\medskip

Then the affine automorphism $\zJ(x)=Cx+d$ of $\mathbb{R}^2$ induces
$\zj$ relative to $\zp_2$.  Theorems~\ref{thm:twisting1} and
\ref{thm:twisting2} now show how to transform the presentation diagram
of $k\circ g$ to obtain a presentation diagram for $f$.
\medskip

\noindent 
\textbf{Step 13. Compute the matrix $A'=CA$, vector $b'\in \{0,C
\zl_1,C \zl_2,C(\zl_1+\zl_2)\}$ congruent to $CAd+Cb$ modulo $2\left<C
\zl_1,C \zl_2\right>$, fundamental domain $F'_1=CF_1$ and line
segments $\za'_i=C\za_i$.}
\medskip

These primed quantities provide our presentation for $f$.

\begin{ex}\label{ex:rabbit} We illustrate the above algorithm by
deriving the NET map presentation for the rabbit $f(z)=z^2+c_R$ in
Figure~\ref{fig:rabbitpren}.  Figure~\ref{fig:rabbitfsr} is a diagram
of a finite subdivision rule for $f$.  The initial cell structure on
$S^2$ is on the right.  Its vertices correspond to the postcritical
points of $f$.  The vertex corresponding to $\infty$ is at infinity.
The vertices of the triangle on the right correspond to 0, $c_R$ and
$c_R^2+c_R$ in counterclockwise order starting at the bottom.  An edge
on the left with label $n$ maps to the edge on the right with label
$n$.
\medskip

  \begin{figure}
\centerline{\includegraphics{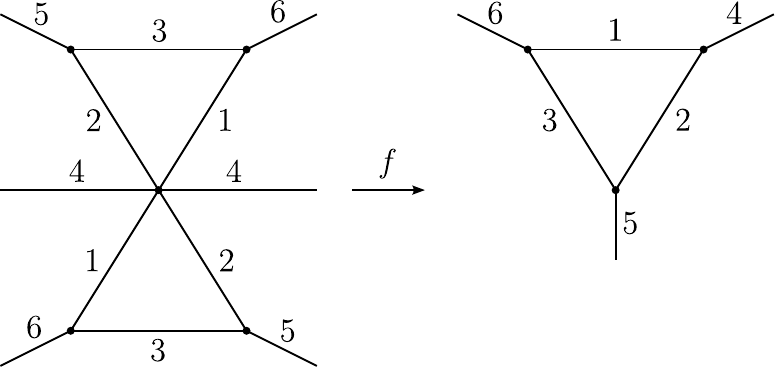}}
\caption{A finite subdivision rule for the rabbit $f$}
\label{fig:rabbitfsr}
  \end{figure}

\noindent
\textbf{Step 1.} $P_2=\{0,c_R,c_R^2+c_R,\infty\}$
\medskip

\noindent
\textbf{Step 2.} $P_1=\{\pm c_R,\pm (c_R^2+c_R)\}$
\medskip

\noindent \textbf{Step 3.} We choose green arcs $\zb_1$, $\zb_2$,
$\zb_3$, $\zb_4$ as in the right side of Figure~\ref{fig:rabbitg}.
Two of these arcs are drawn as green arcs with arrows indicating the
push direction.  There are trivial arcs at the vertices labeled
$\zl_2$ and $\zl_1+\zl_2$.
\medskip

  \begin{figure}
\centerline{\includegraphics{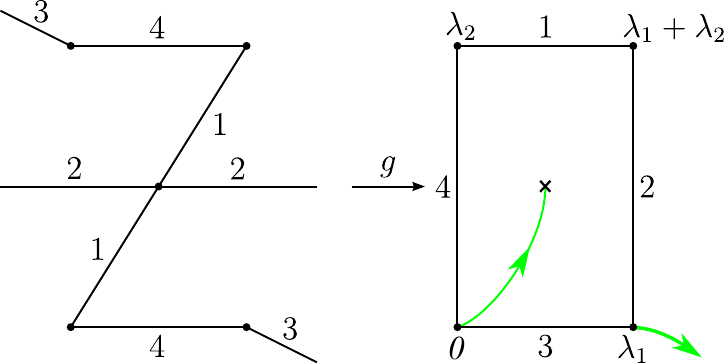}}
\caption{$\partial Q_1$ and its pullback under $g$}
\label{fig:rabbitg}
  \end{figure}

\noindent
\textbf{Step 4.} The quadrilateral $Q_1$ is drawn in the right side of
Figure~\ref{fig:rabbitg}.  
\medskip

\noindent \textbf{Step 5.} The complex $g^{-1}(\partial Q_1)$ is drawn
in the left side of Figure~\ref{fig:rabbitg}.  An edge with label $n$
on the left maps by $g$ to the edge with label $n$ on the right.
\medskip

\noindent
\textbf{Step 6.} To compute $\frac{p}{q}$, we view the edge on the
right with label 1 as a core arc for $\zg$.  Pulling back this edge,
we see that $\frac{p}{q}=1$ and $d=2$.  To compute $\frac{r}{s}$, we
view the edge on the right with label 4 as a core arc for $\zd$.
Pulling back this edge, we see that $\frac{r}{s}=0$ and $e=1$.
\medskip

\noindent
\textbf{Step 7.} $A=\begin{bmatrix}\frac{1}{2} & -1 \\ \frac{1}{2} &
0\end{bmatrix}^{-1}=\begin{bmatrix}0 & 2 \\ -1 & 1 \end{bmatrix}$
\medskip

\noindent
\textbf{Step 8.} $b=\zl_2$
\medskip

\noindent
\textbf{Step 9.} We obtain the diagram in Figure~\ref{fig:rabbitpren}.

\end{ex}

\begin{ex}\label{ex:lodge} For a second example, we work through the
algorithm to find a NET map presentation for
  \begin{equation*}
f(z)=\frac{3z^2}{2z^3+1}.
  \end{equation*}
This is the main example in Lodge's thesis \cite{l}.  It also appears
in Section 4 of \cite{BEKP} and Example 3.2 of \cite{cfpp}.

Note that $f(\zw z)=\zw^2f(z)$, where $\zw=e^{2\zp i/3}$. We compute:
  \begin{equation*}
f'(z)=\frac{6z(2z^3+1)-3z^26z^2}{(2z^3+1)^2}=\frac{6z(1-z^3)}{(2z^3+1)^2}.
  \end{equation*}
The critical points of $f$ are 0, 1, $\zw$ and $\zw^2$.  They are all
simple.  The postcritical points of $f$ are 0, 1, $\zw$ and $\zw^2$.
So $f$ is indeed a NET map.  We have the following table of values.

\begin{center}
\begin{tabular}{c|cccccccc}
$z$ & $\infty$ & 0 & 1 & $\frac{-1}{2}$ & $\zw$ & $\frac{-\zw}{2}$ &
$\zw^2$ & $\frac{-\zw^2}{2}$ \\ \hline
$f(z)$& 0 & 0 & 1 & 1 & $\zw^2$ & $\zw^2$ &  $\zw$ & $\zw$\\ 
\end{tabular}
\end{center}
\medskip

We see that $f$ is decreasing on the intervals $(-\infty,0)$ and
$(1,\infty)$ and that $f$ is increasing on the interval $(0,1)$.  We
conclude that $f$ maps $[-\frac{1}{2},\infty]$ to $[0,1]$ in 3-to-1
fashion.  This proves that the subdivision rule presentation of $f$ in
Figure~\ref{fig:Lodgefsr} is correct.  The label of an edge on the
left is the label of its image edge on the right.

  \begin{figure}
\centerline{\scalebox{.9}{\includegraphics{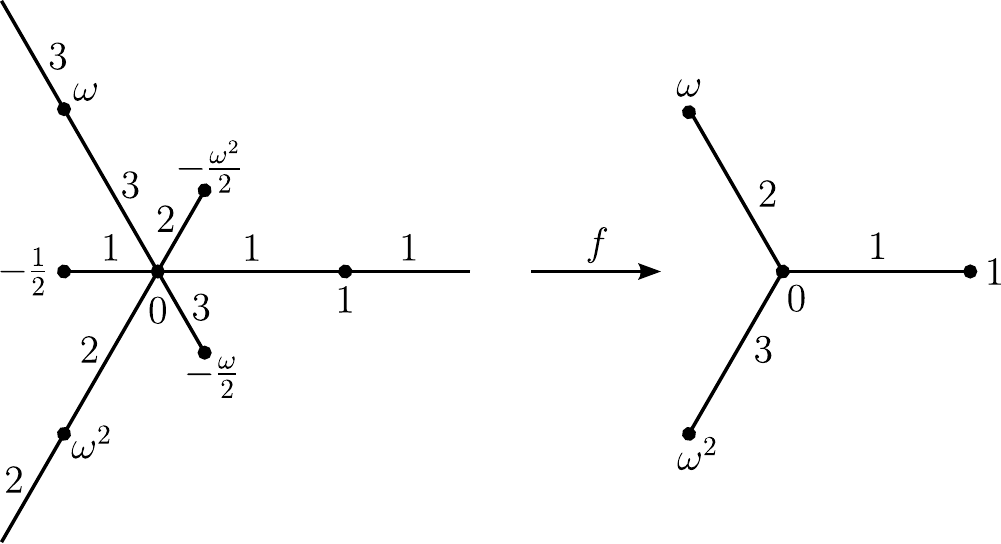}}} \caption{A finite
subdivision rule for $f$}
\label{fig:Lodgefsr}
  \end{figure}

We are prepared to apply the algorithm to find a NET map presentation
for $f$.
\medskip

\noindent
\textbf{Step 1.} $P_2=\{0, 1, \zw, \zw^2\}$
\medskip

\noindent
\textbf{Step 2.}
$P_1=\{-\frac{1}{2},-\frac{\zw}{2},-\frac{\zw^2}{2},\infty\}$
\medskip

\noindent
\textbf{Step 3.} We construct green arcs $\zb_1$, $\zb_2$, $\zb_3$,
$\zb_4$ in $S^2$ as in Figure~\ref{fig:Lodgeg}.  Arrows indicate push
directions.  In the right part of Figure~\ref{fig:Lodgeg}, elements of
$P_1$ are marked with dots and elements of $P_2$ are marked with
$\times $'s.
\medskip

  \begin{figure}
\centerline{\scalebox{.9}{\includegraphics{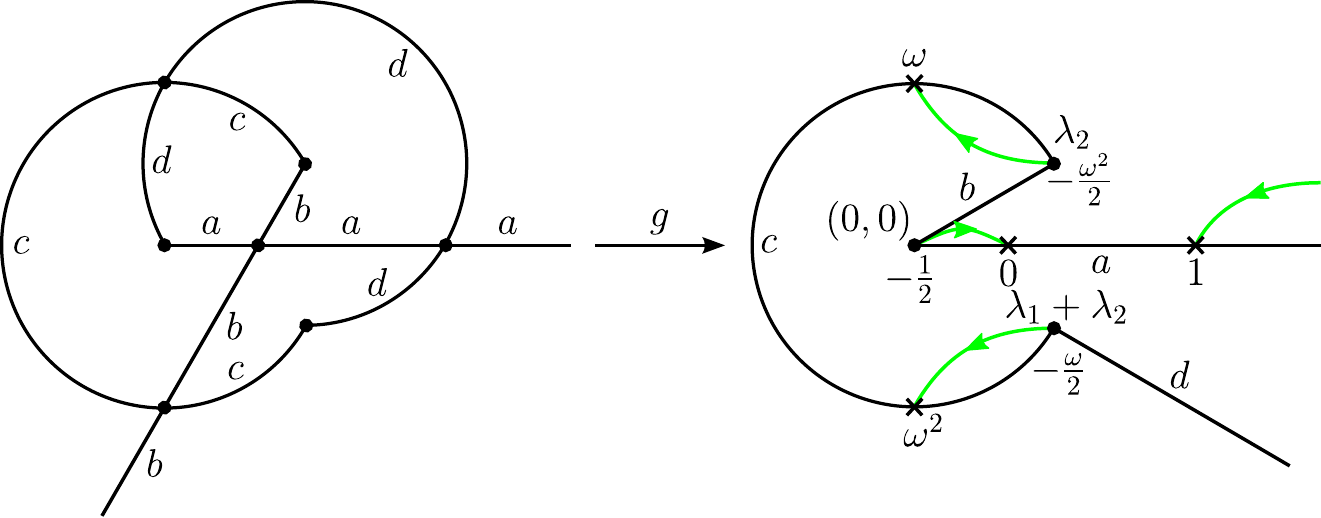}}} \caption{The
boundary of the quadrilateral $Q_1$ and its pullback under $g$}
\label{fig:Lodgeg}
  \end{figure}

\noindent
\textbf{Step 4.} We construct the quadrilateral $Q_1$ so that its
boundary is the union of the black line segments and circular arc in
the right side of Figure~\ref{fig:Lodgeg}.  We label the vertices of
$Q_1$ as shown in Figure~\ref{fig:Lodgeg}.  The label $\zl_1$ is at
$\infty$.
\medskip

\noindent 
\textbf{Step 5.} The complex $g^{-1}(\partial Q_1)$ appears
in the left side of Figure~\ref{fig:Lodgeg}, correct up to isotopy rel
$P_1\cup P_2$.  Its edges are labeled with letters.  The label of an
edge is the label of its image in $\partial Q_1$.
\medskip

\noindent
\textbf{Step 6.} The interval $[-1/2,\infty]$ has label $a$ in the
right side of Figure~\ref{fig:Lodgeg}.  It is a core arc for a simple
closed curve $\zg$ in $S^2\setminus P_1$ with slope 0.
Figure~\ref{fig:Lodgeg} shows that $g^{-1}(\zg)$ consists of one
connected component with slope 0.  So $\frac{p}{q}=0$ and $d=3$.  The
line segment in the right side of Figure~\ref{fig:Lodgeg} with label
$b$ is a core arc for a simple closed curve $\zd$ in $S^2\setminus
P_1$ with slope $\infty$.  Using Figure~\ref{fig:Lodgeg}, we find that
$g^{-1}(\zd)$ consists of one connected component with slope $-3$.  So
$\frac{r}{s}=-3$ and $e=3$.
\medskip

\noindent
\textbf{Step 7.} To get a matrix with positive determinant, we take
  \begin{equation*}
A=\begin{bmatrix}\frac{q}{d} & \frac{s}{e} \\ \frac{p}{d} &
\frac{r}{e} \end{bmatrix}^{-1}
=\begin{bmatrix}\frac{1}{3} & -\frac{1}{3} \\ 0 & 1 \end{bmatrix}^{-1}
=\begin{bmatrix}3 & 1 \\ 0 & 1 \end{bmatrix}.
  \end{equation*}
\medskip

\noindent
\textbf{Step 8.} We see that $b=\zl_1$.
\medskip

\noindent
\textbf{Step 9.} We obtain the NET map presentation diagram in
Figure~\ref{fig:LodgeDgm}. 
\medskip

  \begin{figure}
\centerline{\scalebox{.9}{\includegraphics{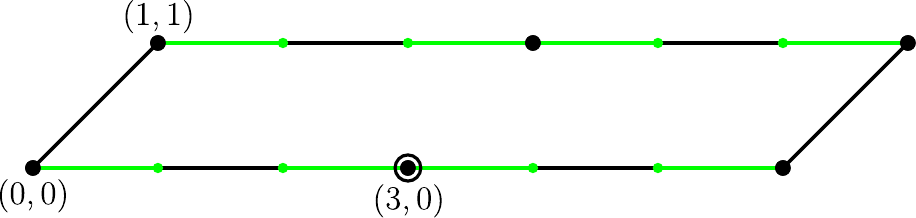}}} \caption{A
presentation diagram for $f$}
\label{fig:LodgeDgm}
  \end{figure}
\end{ex}

\end{document}